\newtheorem{thm}{Theorem}
\newtheorem{definition}{Definition}[section]
\newtheorem{lemma}[thm]{Lemma}
\newtheorem{ex}{Example}[section]
\title[Expansiveness for flows on noncompact spaces]{Expansiveness for flows on noncompact spaces}
\author{Y. Yang}
\address{School of Mathematics and Statistics, Liaoning University, Shenyang, 110036, China.}
\email{yangyinong@lnu.edu.cn}
\author{C.A. Morales}
\address{Hangzhou International Innovation Institute of Beihang University,  
Hangzhou 311115, China.}
\email{morales@impa.br}
\keywords{Topological expansive flow, metric space, Expansive flow}
\subjclass[2020]{Primary  54H20; Secondary 49J53}
\begin{document}

\begin{abstract} The classical notion of expansive flow \cite{bw} is invariant under topological conjugacy on compact spaces but not in general \cite{lny}. Then, we introduce the concept of {\em topological expansive flow}. We prove that this concept is conjugacy-invariant and coincides with expansivity on compact spaces. Moreover, a flow is topological expansive if and only if it is rescaling expansive \cite{jny} and its singularities are isolated points of the space.
Finally, we prove that the growth rate of periodic orbits for topological expansive flows is controlled by an entropy-like invariant. This extends Bowen–Walters inequality \cite{bw}.
\end{abstract}

\maketitle

\section{Introduction}

\noindent
A fundamental problem in dynamics is to determinate which dynamical properties are preserved under topological conjugacy or topological equivalence.
Certain classical properties—such as minimality, transitivity, and the nonwandering property—do. In the compact setting, additional properties like expansivity, shadowing, and topological stability also do. However, these latter properties fail to be invariant outside the compact case \cite{lny}.

In this paper, we focus on the expansivity for flows on metric spaces
introduced by Bowen and Walters \cite{bw}. Though these authors proved that this property is invariant under topological equivalence on compact spaces, it is not so
on general metric spaces. Then, we use the methods in \cite{lny} to introduce the notion of {\em topological expansive flow}. We show that this definition is not only invariant under topological conjugacy but also agrees with expansivity in the compact case. We further prove that a flow is topological expansive if and only if it is rescaling expansive (as defined in \cite{jny}) and its singularities are isolated points of the space.
Finally, we estimate the growth rate of periodic orbits for topological expansive flows through an entropy-like invariant, extending the Bowen–Walters inequality \cite{bw}.
We now state our results precisely.

A {\em flow} on a metric space $X$ is a one-parameter family of continuous maps $\phi = \{\phi_t : X \to X\}_{t \in \mathbb{R}}$ such that $\phi_0 = \mathrm{id}_X$, $\phi_{s+t} = \phi_s \circ \phi_t$, and the map $(x, t) \mapsto \phi_t(x)$ is continuous. For $I \subset \mathbb{R}$, we define the orbit segment $\phi_I(x) = \{\phi_t(x) : t \in I\}$. The full orbit of $x \in X$ is $\phi_{\mathbb{R}}(x)$. An {\em orbit} is any such set.
Two flows $\phi$ on $X$ and $\varphi$ on $Y$ are {\em topologically conjugate} if there exists a homeomorphism $h : Y \to X$ such that $\phi_t \circ h = h \circ \varphi_t$ for all $t$. If $h$ only maps orbits of $\varphi$ onto those of $\phi$, the flows are said to be {\em topologically equivalent}.
A property of flows is said to be {\em invariant under topological conjugacy} (resp. {\em equivalence}) if it is preserved under topologically conjugate (resp. equivalent) flows. Since conjugacy implies equivalence, the latter invariance implies the former.

On the other hand, a flow $\phi$ on $X$ is {\em expansive} if for every $\epsilon > 0$ there exists $\delta > 0$ such that if $x, y \in X$ and a continuous map $\alpha : \mathbb{R} \to \mathbb{R}$ with $\alpha(0) = 0$ satisfy
$d(\phi_t(x), \phi_{\alpha(t)}(y)) < \delta \quad \text{for all } t\in\mathbb{R},$
then $y \in \phi_{[-\epsilon,\epsilon]}(x)$. This property is invariant under topological equivalence on compact metric spaces (Corollary 4 in \cite{bw}) but not under conjugacy in general. A counterexample can be obtained from Example 1 of \cite{jny}. Another counterexample is as follows.

\begin{ex}
\label{jimmy}
Let $\phi$ be the flow on $X=\{(x,y,z)\in \mathbb{R}^3\mid z=0, (x,y)\neq(0,0)\}$ defined by $\phi_t(w) = e^tw$ for all $w\in X$ and $t\in\mathbb{R}$.
It follows that $\phi$ is expansive.
Let $\varphi$ be its conjugate on the punctured sphere $Y = S^2 \setminus \{\mbox{the poles}\}$ via stereographic projection. It follows that $\varphi$ is not expansive.
\end{ex}

These examples motivated \cite{jny} to introduce the following definitions:
Let $\mathrm{Sing}(\phi)$ denote the set of {\em singularities} i.e. points $x\in X$ with $\phi_t(x)=x$ for all $t\in\mathbb{R}$. Define
$$
C_\phi(X) = \{ \delta:X\to [0,\infty): \delta\mbox{ is continuous and }\delta^{-1}(0) = \mathrm{Sing}(\phi) \}.
$$
\begin{definition}
A flow $\phi$ on $X$ is {\em rescaling expansive} if
for every $\epsilon > 0$ there exists $\delta \in C_\phi(X)$ such that if $x, y \in X$ and a continuous map $\alpha : \mathbb{R} \to \mathbb{R}$ with $\alpha(0) = 0$ satisfy
$d(\phi_t(x), \phi_{\alpha(t)}(y)) \leq \delta(\phi_t(x))$ for all $t\in\mathbb{R},$
then $y \in \phi_{[-\epsilon,\epsilon]}(x)$.
\end{definition}

Though rescaling expansivity is invariant under topological conjugacy (Proposition 1 in \cite{jny}) it does not reduce to expansivity on compact metric spaces.
A counterexample is as follows.

\begin{ex}
\label{lor}
Let $X$ be the {\em geometric Lorenz attractor} (or "strange strange attractor" in \cite{g}). It is a compact subset of $\mathbb{R}^3$ endowed with a flow $\phi$. It follows that $\phi$ is rescaling expansive in the sense of \cite{wew} and so
rescaling expansive as defined above. On the other hand, $\phi$ is not expansive since it has a singularity which is not an isolated point of $X$ (see \cite{bw}).
\end{ex}

Now, we introduce a notion of expansivity for flows that is both invariant under topological conjugacy and reduces to expansivity on compact metric spaces.
We will use the techniques in \cite{lny} based on previous works by Dowker \cite{d} and Hurley \cite{h}.
Define
$$
C^+(X)=\{\delta:X\to (0,\infty):\delta\mbox{ is continuous }\}
$$
\begin{definition}
\label{def1}
A flow $\phi$ on $X$ is {\em topological expansive} 
if for every $\epsilon > 0$ there exists $\delta \in C^+(X)$ such that if $x, y \in X$ and a continuous map $\alpha : \mathbb{R} \to \mathbb{R}$ with $\alpha(0) = 0$ satisfy
$d(\phi_t(x), \phi_{\alpha(t)}(y)) < \delta(\phi_t(x)) \quad \text{for all } t\in\mathbb{R},$
then $y \in \phi_{[-\epsilon,\epsilon]}(x)$.
\end{definition}

This is the flow version of what was called "expansive homeomorphism" in \cite{lny}. We adopt the term "topological expansive" (following \cite{c}) rather than "expansive", as used in \cite{lny} because of the comparisons in the examples below.

This generalization of expansivity replaces the constant $\delta$ in the definition of expansivity with a continuous positive function. Every expansive flow is therefore topological expansive. The converse is not true by the
example below. A metric space $X$ is {\em discrete} if for every $x\in X$ there is $\rho_x\in (0,\infty)$ such that $B(x,\rho_x)=\{x\}$.
If we can choose $\rho_x$ to be independent on $x$, then we say that
$X$ is {\em uniformly discrete}.

\begin{ex}
\label{go}
The trivial flow $\phi$ (i.e. $\phi_t=id_X$ for all $t\in\mathbb{R}$) is topological expansive if and only if $X$ is discrete.
On the other hand, $\phi$ is expansive if and only if $X$ is uniformly discrete.
\end{ex}

In particular, the trivial flow of a discrete but not uniformly discrete space is topological expansive but not expansive. Further examples of this type will be given below.

We now state that the notion of topological expansive flow corresponds to our expectations.

\begin{thm}
\label{thA}
Topological expansivity is invariant under topological conjugacy and reduces to expansivity on compact metric spaces. More precisely, a flow of a compact metric space is topological expansive if and only if it is expansive.
\end{thm}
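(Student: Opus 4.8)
The statement has two independent parts: the reduction to classical expansivity on compact spaces, and conjugacy invariance. I would dispatch the compact case first, since it is short. That expansivity implies topological expansivity holds on any space: given $\epsilon>0$, a constant $\delta_0>0$ witnessing expansivity defines the constant function $\delta\equiv\delta_0\in C^+(X)$, and Definition \ref{def1} holds verbatim (this is the remark already made before Example \ref{go}). Conversely, assume $X$ is compact and $\phi$ is topological expansive. Given $\epsilon>0$, take $\delta\in C^+(X)$ from Definition \ref{def1}. Since $X$ is compact and $\delta$ is continuous and strictly positive, $\delta_0:=\min_{x\in X}\delta(x)>0$. Then the constant $\delta_0$ witnesses expansivity: if $d_X(\phi_t(x),\phi_{\alpha(t)}(y))<\delta_0\le\delta(\phi_t(x))$ for all $t$, topological expansivity forces $y\in\phi_{[-\epsilon,\epsilon]}(x)$. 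Hence the two notions coincide on compact spaces.

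For the conjugacy invariance, let $h:Y\to X$ be a homeomorphism with $\phi_t\circ h=h\circ\varphi_t$ for all $t$, and write $d_X,d_Y$ for the metrics. Invariance is symmetric in $\phi,\varphi$ (apply the argument to $h^{-1}$), so it suffices to show that topological expansivity of $\phi$ implies that of $\varphi$. Fix $\epsilon>0$ and let $\delta\in C^+(X)$ be given by Definition \ref{def1} for $\phi$. The plan is to manufacture a \emph{rescaling function} $\delta'\in C^+(Y)$ with the single transfer property
$$
d_Y(w,w')<\delta'(w)\ \Longrightarrow\ d_X(h(w),h(w'))<\delta(h(w)),\qquad w,w'\in Y.
$$
Granting this, the verification is formal: if $p,q\in Y$ and a reparametrization $\alpha$ satisfy $d_Y(\varphi_t(p),\varphi_{\alpha(t)}(q))<\delta'(\varphi_t(p))$ for all $t$, then setting $x=h(p)$, $y=h(q)$ and using $\phi_t\circ h=h\circ\varphi_t$ we get $d_X(\phi_t(x),\phi_{\alpha(t)}(y))<\delta(\phi_t(x))$ for all $t$; topological expansivity of $\phi$ yields $y\in\phi_{[-\epsilon,\epsilon]}(x)=h(\varphi_{[-\epsilon,\epsilon]}(p))$, and injectivity of $h$ gives $q\in\varphi_{[-\epsilon,\epsilon]}(p)$, as required.

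Everything therefore reduces to producing $\delta'$, and this is where the whole difficulty—and the reason a constant $\delta$ fails—concentrates. The natural candidate is the optimal radius $g(w)=\inf\{d_Y(w,w'):d_X(h(w),h(w'))\ge\delta(h(w))\}$ (truncated by $1$ so it is finite). For fixed $w$ this infimum is positive, because the competing set is the $h$-preimage of a $d_X$-closed set missing $h(w)$, hence $d_Y$-closed and missing $w$; moreover any continuous $\delta'$ with $0<\delta'\le g$ satisfies the transfer property. The obstacle is that $g$ need not be continuous, and on a non–locally compact space a positive function can fail to admit any positive continuous minorant. The key point I would prove is that the lower semicontinuous envelope of $g$ is still strictly positive: if $w_n\to w$ with near-optimal witnesses $w_n'$, then $d_X(h(w_n),h(w_n'))\ge\delta(h(w_n))$ together with continuity of $h$ and of $\delta$ forces $h(w_n')$ to stay outside a fixed ball $B_X(h(w),\delta(h(w))-\epsilon_0)$ for large $n$, whence $d_Y(w,w_n')\ge\mathrm{dist}_{d_Y}\!\big(w,\,h^{-1}(X\setminus B_X(h(w),\delta(h(w))-\epsilon_0))\big)>0$; this lower bound is independent of the sequence, so $\liminf_{w'\to w}g(w')>0$. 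Thus the lower semicontinuous regularization $g_\flat$ of $g$ is positive, and since metric spaces are paracompact (indeed perfectly normal), a positive lower semicontinuous function admits a strictly positive continuous minorant—here I would invoke the insertion technique of Dowker \cite{d} used in \cite{lny}. Choosing $\delta'$ to be such a minorant completes the construction, and with it the proof. The main obstacle is precisely this semicontinuity step: it is the mechanism by which allowing $\delta$ to vary continuously rescues the conjugacy invariance that the constant definition of Bowen–Walters lacks.
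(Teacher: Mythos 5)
Your proof is correct and follows essentially the same route as the paper: the compact case via the positive minimum of $\delta$ on $X$ (plus the trivial converse), and conjugacy invariance via a transfer function $\delta'\in C^+(Y)$ with $d_Y(w,w')<\delta'(w)\Rightarrow d_X(h(w),h(w'))<\delta(h(w))$ followed by the same formal verification. The only difference is that the paper obtains $\delta'$ by citing Lemma 2.7 of \cite{lny}, whereas you construct it directly (optimal radius, positivity of its lower semicontinuous envelope, Dowker insertion), which amounts to inlining that cited lemma rather than taking a genuinely different path.
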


From this theorem we derive the following example.

\begin{ex}
\label{patin}
The flow $\varphi$ from Example \ref{jimmy} is topological expansive by Theorem \ref{thA}
since it is conjugate to the expansive (hence topological expansive) flow $\phi$. However, $\varphi$ is not expansive.
\end{ex}

Another example of a topological expansive flow which is not expansive is motivated by Example 2.10 in \cite{c}:

\begin{ex}
\label{jim}
Let $\phi$ be a {\em translation flow} of a Banach space $X$, i.e.,
there is $v\in X\setminus\{0\}$ such that $\phi_t(x)=x+tv$ for all $x\in X$ and $t\in\mathbb{R}$. Then, $\phi$ is topological expansive but not expansive.
\end{ex}

\begin{proof}
First we prove that $\phi$ is topological expansive.
Given $\epsilon>0$ define
$\delta:X\to(0,\infty)$ by
$$
\delta(x)=\frac{1}2\|v\|\epsilon e^{-\|x\|},\quad\quad\forall x\in X.
$$
Then, $\delta(x)<\|v\|\epsilon$ for all $x\in X$ and
$\delta(x)\to0$ as $\|x\|\to\infty$.

Now, suppose that $x,y\in X$ and a continuous function $s:\mathbb{R}\to\mathbb{R}$ fixing $0$ satisfy
\begin{equation}
\label{barraza}
\|\phi_t(x)-\phi_{s(t)}(y)\|<\delta(\phi_t(x)),\quad\quad\forall t\in\mathbb{R}.
\end{equation}
Clearly $\phi_t$ is an isometry, so
$\|x-\phi_{s(t)-t}(y)\|\leq \delta(\phi_t(x))$ for all $t\in\mathbb{R}$ thus
$$
(s(t)-t)v\to x-y\quad\mbox{ as }\quad
t\to\pm\infty.
$$
But $(s(t)-t)v\in \langle v\rangle$ (the subspace generated by $v$) which is closed so $x-y\in \langle v\rangle$ hence
$y=x+\alpha v$ for some $\alpha\in\mathbb{R}$.
It follows that
$$
y=\phi_\alpha(x).
$$
To estimate $\alpha$, we just replace $t=0$ in \eqref{barraza} to get
$$
\|x-y\|<\delta(x)<\|v\|\epsilon.
$$
Then, $\|x-(x+\alpha v)\|<\|v\|\epsilon$ so
$$
|\alpha|<\epsilon
$$
thus $y\in \phi_{[-\epsilon,\epsilon]}(x)$ whence $\phi$ is topological expansive.

Finally, we see that $\phi$ is not expansive since it is a flow by isometries of a Banach space.
This completes the proof.
\end{proof}

One more application of Theorem \ref{thA} is as follows.
A $C^1$ vector field $V$ of a Riemannian manifold is topological expansive if its induced flow is topological expansive.

\begin{ex}
\label{colina}
The vector field $V(x,y)=(1, y)$ of $\mathbb{R}^2$ is topological expansive.
\end{ex}

\begin{proof}
The induced flow $\phi$ is given by the solutions of the ODE's in $\mathbb{R}^2$,
$$
\left\{
\begin{array}{rcl}
\dot{x}=1\\
\dot{y}=y.
\end{array}
\right.
$$
By solving it we obtain $\phi_t(x,y)=(x+t,e^ty)$ for $x,y,t\in \mathbb{R}$.
Let $\varphi$ be the flow of $\mathbb{R}^2$ defined by
$\varphi_t(x,y)=(x+t,y)$ for all $x,y,t\in\mathbb{R}$. Then, $\varphi$ is a translation flow
(just take $v=(1,0)$) and so topological expansive by Example \ref{jim}.
Now, define $h:\mathbb{R}^2\to\mathbb{R}^2$ by $h(x,y)=(x,e^xy)$ for all $x,y\in \mathbb{R}$. Then, $h$ is a homeomorphism and  $\phi_t\circ h=h\circ \varphi_t$ for all $t\in\mathbb{R}$ so $\phi$ and $\varphi$ are topologically conjugated.
Then, $\phi$ (and so $V$) are topological expansive by Theorem \ref{thA}.
\end{proof}

Now, we compare topological expansivity and rescaling expansivity.

\begin{thm}
\label{don}
A flow of a metric space $X$ is topological expansive if and only if it is rescaling expansive and its singularities are isolated points of $X$.
\end{thm}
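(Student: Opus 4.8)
The plan is to prove both implications of the equivalence by carefully tracking how the defining continuous functions in each notion relate to one another. The key structural observation is that the two definitions differ in exactly two places: topological expansivity uses functions in $C^+(X)$ (strictly positive everywhere), whereas rescaling expansivity uses functions in $C_\phi(X)$ (vanishing precisely on $\mathrm{Sing}(\phi)$), and the former places no a priori constraint on the singularities while the latter is silent about their topological nature. So the content of the theorem is that bridging the gap between $C^+(X)$ and $C_\phi(X)$ is possible exactly when the singularities are isolated.

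For the forward direction, I would assume $\phi$ is topological expansive and prove two things. First, that the singularities are isolated: I expect to argue by contradiction, taking a non-isolated singularity $p$ with a sequence $y_n\to p$, $y_n\neq p$, and feeding the pair $x=p$, $y=y_n$ (with $\alpha\equiv 0$, since $\phi_t(p)=p$) into Definition \ref{def1}. For any prescribed $\epsilon$ the topological-expansivity witness $\delta\in C^+(X)$ satisfies $\delta(p)>0$, so for large $n$ we have $d(\phi_t(p),\phi_0(y_n))=d(p,y_n)<\delta(p)=\delta(\phi_t(p))$ for all $t$, forcing $y_n\in\phi_{[-\epsilon,\epsilon]}(p)=\{p\}$, a contradiction. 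Second, that $\phi$ is rescaling expansive: given $\epsilon>0$, take the topological-expansivity function $\delta_0\in C^+(X)$ and produce a $\delta\in C_\phi(X)$ with $\delta\le\delta_0$; since singularities are isolated, $\mathrm{Sing}(\phi)$ is an open-and-closed-in-itself discrete set, and one can manufacture a continuous $\delta$ vanishing exactly on $\mathrm{Sing}(\phi)$ and bounded above by $\delta_0$ (for instance by multiplying $\delta_0$ by a continuous function that is $0$ on $\mathrm{Sing}(\phi)$ and positive off it, e.g.\ built from $\min\{1,\operatorname{dist}(x,\mathrm{Sing}(\phi))\}$). Then $d(\phi_t(x),\phi_{\alpha(t)}(y))\le\delta(\phi_t(x))\le\delta_0(\phi_t(x))$, and I must be mildly careful to match the strict versus non-strict inequalities, possibly by shrinking $\delta$ slightly, so that the topological-expansivity hypothesis applies and yields $y\in\phi_{[-\epsilon,\epsilon]}(x)$.

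For the converse, I would assume $\phi$ is rescaling expansive with isolated singularities and, given $\epsilon>0$, take the rescaling witness $\delta_1\in C_\phi(X)$ and upgrade it to some $\delta\in C^+(X)$. The only points where $\delta_1$ fails to be positive are the singularities, which are now isolated; around each isolated singularity $p$ one can add a small positive bump, i.e.\ define $\delta=\max\{\delta_1,\eta\}$ where $\eta\in C^+(X)$ is chosen small enough near each singularity that the conclusion is not disturbed. The delicate point is to ensure that whenever the strict inequality $d(\phi_t(x),\phi_{\alpha(t)}(y))<\delta(\phi_t(x))$ holds along the orbit of $x$, the rescaling conclusion still applies; if the orbit of $x$ avoids the singularities then $\delta$ and $\delta_1$ can be arranged to agree there, while if $x$ itself is a singularity then $\phi_t(x)=x=p$ is isolated, and the strict bound together with isolatedness forces $y=x=\phi_0(x)\in\phi_{[-\epsilon,\epsilon]}(x)$ directly.

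The main obstacle I anticipate is the reconciliation of the strict inequality in Definition \ref{def1} with the non-strict inequality in the definition of rescaling expansivity, combined with the need to control $\delta$ simultaneously on $\mathrm{Sing}(\phi)$ and on its complement. The construction of the interpolating continuous function that vanishes exactly on $\mathrm{Sing}(\phi)$ (using isolatedness) while respecting the required upper or lower bound is the technical heart; once one has the right function, both implications follow by substituting it into the appropriate definition and invoking the already-established conclusion $y\in\phi_{[-\epsilon,\epsilon]}(x)$. I would handle the singular orbits as a separate, easy case throughout, since there $\alpha\equiv 0$ is forced and isolatedness collapses the argument immediately.
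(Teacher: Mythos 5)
Your proposal is correct, and the necessity half is essentially the paper's argument: the isolation of singularities via the pair $(p,y_n)$ with $\alpha\equiv 0$ is exactly Lemma~\ref{alacran}, and your construction of a witness in $C_\phi(X)$ by damping the topological-expansivity function $\delta_0$ (with the factor $\tfrac12$ to fix the strict-versus-nonstrict issue, and the singular case collapsing to $y=x$ because the damped function vanishes there) matches the paper's piecewise definition of $\hat\delta$. The sufficiency half is where you take a genuinely more direct route. The paper factors it through three auxiliary results: Lemma~\ref{meta} (the restriction of a rescaling expansive flow to $X\setminus\mathrm{Sing}(\phi)$ is topological expansive), Example~\ref{go} (the trivial flow on a discrete set is topological expansive), and the gluing Lemma~\ref{chuzo}, whose proof invokes Hurley's separation lemma to patch the two witnesses into a single continuous $\delta\in C^+(X)$. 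You instead build the witness in one step: since isolated singularities make $\mathrm{Sing}(\phi)$ clopen (it is always closed, and isolation makes it open), you can set $\delta=\delta_1$ on the regular set and $\delta(\sigma)\le\rho_\sigma$ at each singularity, obtaining continuity for free and handling the singular case by $B(\sigma,\rho_\sigma)=\{\sigma\}$; on a regular orbit $\delta$ agrees with $\delta_1$, so the strict bound feeds directly into the rescaling hypothesis. Your inlined argument is shorter and avoids Hurley's lemma entirely for this theorem; the paper's modular lemmas cost more setup but are reusable (Lemma~\ref{chuzo} is a general statement about clopen invariant decompositions, and Lemma~\ref{meta} isolates the role of rescaling expansivity on the regular part). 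The only point to nail down in a full write-up is the one you already flag: choosing the bump at each singularity $\sigma$ below the isolation radius $\rho_\sigma$, so that the strict inequality at $t=0$ forces $y=\sigma$.
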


In particular, not every rescaling expansive flow is topological expansive (Example \ref{lor}).

Next, we deal with the growth rate of the periodic orbits namely
$$
\limsup_{t\to\infty}\frac{1}t\log v(t),
$$
where $v(t)$ denotes the number of periodic orbits with period less than or equal to $t$ of the given flow $\phi$.

Recall that a point $x$ is {\em periodic} if $\phi_t(x) = x$ for some minimal $t > 0$ (called period) while a {\em periodic orbit} is the full orbit of a periodic point. Clearly, every periodic orbit is formed by periodic points all of them with common period. Such a common value is then referred to as the {\em period of the periodic orbit}.

The motivation is Bowen-Walters inequality \cite{bw}
$$
\limsup_{t\to\infty}\frac{1}t\log v(t)\leq e(\phi)
$$
which holds for expansive flows $\phi$ on compact metric spaces.
Here $e(\phi)$ is the topological entropy of $\phi$ defined by $e(\phi)=h(\phi_1)$ where $h(f)$ is the Adler-Konheim-McAndrew topological entropy for maps $f:X\to X$.
Equivalently, $e(\phi)$ is given by the Bowen-Dinaburg formula,
$$
e(\phi)=\lim_{\epsilon\to0}\limsup_{t\to\infty}\frac{1}t\log S(t,\epsilon),
$$
where $S(t,\epsilon)$ is the maximal cardinality of a $(t,\epsilon)$-separating set i.e. $E\subset X$ such that for all distinct $x,y\in E$ there is $0\leq s\leq t$ such that $d(\phi_t(x),\phi_t(y))\geq \epsilon$.

The question is if an analogous inequality holds for the topological expansive flows.
To handle this question we introduce the following entropy-like number
$$
e^*(\phi) = \sup_K e^*(\phi, K),
$$
where the supremum is over compact sets $K \subset X$, and
$$
e^*(\phi, K) = \sup_{\delta \in C_+(X)} \limsup_{t \to \infty} \frac{1}{t} \log \left(\frac{S(t, \delta, K)}{\beta(t,\delta,K)}\right),
$$
where
$$
\beta(t,\delta,K)=\inf\{\delta(\phi_s(z))\mid (z,s)\in K\times [0,t]\}
$$
and $S(t, \delta, K)$ denoting the maximal cardinality of a {\em $(t, \delta, K)$-separating set}. The latter are subsets $E\subset K$ satisfying that for all distinct $x,y\in E$ there is $0\leq s\leq t$ such that
$d(\phi_s(x),\phi_s(y))\geq \delta(\phi_s(x))$.
Sometimes we use the notations $S(t, \delta, K,\phi)$ and $\beta(t, \delta, K,\phi)$
to indicate dependence of $\phi$.

To justify the entropy-like nature of $e^*(\phi)$ we will prove the following result (resembling some features of the topological entropy on compact metric spaces).

\begin{thm}
\label{sa}
The properties below hold for all flows $\phi$ and $\varphi$ of metric spaces $X$ and $Y$ and all $a\in (0,\infty)$:
\begin{enumerate}
\item
If $\phi$ and $\varphi$ are topologically conjugated, then $e^*(\phi)=e^*(\varphi)$.
\item
If $X$ is compact, then $e^*(\phi)=e(\phi)$.
\item
If $X=Y$ and $\varphi_t=\phi_{at}$, then $e^*(\varphi)=ae^*(\phi)$.
\item
Let $R(t,\delta,K)$ be the minimal cardinality of a {\em $(t,\delta,K)$-spanning set} i.e.
$F\subset X$ such that $\forall y\in K$ $\exists x\in F$ such that $d(\phi_s(x),\phi_s(y))\leq \delta(\phi_s(x))$ for all $0\leq s\leq t$. Then,
$$
e^*(\phi)=\sup_K\sup_{\delta\in C^+(X)}\limsup_{t\to\infty}\frac{1}t\log \left(\frac{R(t,\delta,K)}{\beta(t,\delta,K)}\right).
$$
\end{enumerate}
\end{thm}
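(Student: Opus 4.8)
My plan is to settle the time rescaling and the compact case first, since they fix the normalisation conventions. For item (3) I would argue by a change of variable: the flows $\varphi_t=\phi_{at}$ share the same orbits, and the substitution $u=as$ turns every ingredient for $\varphi$ on $[0,t]$ into the corresponding one for $\phi$ on $[0,at]$, so $S(t,\delta,K,\varphi)=S(at,\delta,K,\phi)$ and $\beta(t,\delta,K,\varphi)=\beta(at,\delta,K,\phi)$ for every $\delta$ and $K$. Writing $\frac1t=a\cdot\frac1{at}$ and letting $\tau=at\to\infty$ then gives $e^*(\varphi,K)=a\,e^*(\phi,K)$, hence $e^*(\varphi)=a\,e^*(\phi)$ after taking suprema. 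For item (2), the key point is that on a compact $X$ every $\delta\in C^+(X)$ satisfies $0<\min_X\delta\le\delta\le\max_X\delta$, so $\beta(t,\delta,K)\ge\min_X\delta>0$ is bounded below uniformly in $t$ and the factor $\frac1t\log(1/\beta(t,\delta,K))$ vanishes in the $\limsup$. Thus $e^*(\phi,K)=\sup_\delta\limsup_t\frac1t\log S(t,\delta,K)$; testing with the constant functions $\delta\equiv\epsilon$ recovers the usual $(t,\epsilon)$-separating sets and, by monotonicity in $\epsilon$, yields $e^*(\phi,X)\ge e(\phi)$, while every $(t,\delta,X)$-separating set is $(t,\min_X\delta)$-separating, giving $e^*(\phi,X)\le e(\phi)$. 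Since $e^*(\phi,K)$ is monotone in $K$ and $X$ is the largest compact set, $e^*(\phi)=e^*(\phi,X)=e(\phi)$.

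For the conjugacy invariance (1) it suffices, by symmetry, to prove $e^*(\varphi)\le e^*(\phi)$ for a conjugacy $h\colon Y\to X$. Fixing a compact $K\subset Y$ and $\tilde\delta\in C^+(Y)$, I would manufacture $\delta\in C^+(X)$ so that $h$ carries $(t,\tilde\delta,K,\varphi)$-separating sets injectively to $(t,\delta,h(K),\phi)$-separating sets while keeping the two $\beta$'s ordered. Using $\phi_s\circ h=h\circ\varphi_s$, the separating transfer reduces to the pullback condition $h^{-1}(B_X(h(a),\delta(h(a))))\subset B_Y(a,\tilde\delta(a))$, which holds once $\delta(h(a))$ is at most the largest radius $\rho$ with $h^{-1}(B_X(h(a),\rho))\subset B_Y(a,\tilde\delta(a))$, a quantity positive by continuity of $h^{-1}$; the $\beta$-control asks in addition that $\delta(h(a))\le\tilde\delta(a)$. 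Both are upper bounds, so it suffices to find a continuous positive $\delta$ lying below the composition with $h^{-1}$ of the minimum of $\tilde\delta$ and that radius. Taking the radius lower semicontinuous, this minimum is a positive lower semicontinuous function on $X$, which admits a positive continuous minorant, exactly the construction of \cite{lny}. With such $\delta$ one gets $S(t,\tilde\delta,K,\varphi)\le S(t,\delta,h(K),\phi)$ and $\beta(t,\tilde\delta,K,\varphi)\ge\beta(t,\delta,h(K),\phi)$, and since $h(K)$ exhausts the compacta of $X$, the desired inequality follows.

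Identity (4) is the heart of the matter, and here the obstacle is the asymmetry of the thresholds: both the separating and the spanning conditions measure $d(\phi_s x,\phi_s y)$ against $\delta$ evaluated at the first point, so the classical argument ``a maximal separating set is spanning'' does not close when $\delta$ varies between orbit-close points. My plan is to remove this by reducing both suprema to $1$-Lipschitz scales. Replacing $\delta$ by its inf-convolution $\hat\delta(p)=\inf_q[\delta(q)+d(p,q)]$ produces a $1$-Lipschitz function with $0<\hat\delta\le\delta$; since lowering the scale only increases $S$ (and $R$) and only decreases $\beta$, passing to $\hat\delta$ never decreases the ratio, so the $1$-Lipschitz scales are cofinal in both suprema. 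The gain is that a $1$-Lipschitz $\delta$ obeys the scale-invariant estimate $d(q,q')\le c\,\delta(q)\Rightarrow\delta(q')\ge(1-c)\delta(q)$ globally, not merely on the compact orbit tubes.

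With this in hand the two comparisons close with explicit constants. On the one hand, if $E$ is $(t,\delta,K)$-separating and $F$ is $(t,\tfrac14\delta,K)$-spanning, assigning to each point of $E$ a $\tfrac14\delta$-spanning point and combining the triangle inequality with the Lipschitz estimate forces the assignment to be injective, so $S(t,\delta,K)\le R(t,\tfrac14\delta,K)$. On the other hand, a maximal $(t,\tfrac12\delta,K)$-separating set is $(t,\delta,K)$-spanning: for an unspanned $y$ the failing pair supplied by maximality is, in the troublesome orientation, controlled by $d(\phi_s x,\phi_s y)<\tfrac12\delta(\phi_s y)$, whence the $1$-Lipschitz estimate gives $\delta(\phi_s y)<2\delta(\phi_s x)$ and therefore $d(\phi_s x,\phi_s y)<\delta(\phi_s x)$, so $y$ is spanned after all and $R(t,\delta,K)\le S(t,\tfrac12\delta,K)$. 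Because replacing $\delta$ by a constant multiple multiplies $\beta$ by the same constant, the factors $\tfrac14$ and $\tfrac12$ disappear under $\frac1t\log(\cdot)$ as $t\to\infty$, so the two ratios have equal suprema, which is identity (4). The main difficulty throughout is this taming of the position-dependent thresholds, and the inf-convolution reduction is the device that makes every estimate uniform in $t$.
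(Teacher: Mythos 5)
Your proposal is correct. For items (1)--(3) it coincides with the paper's own proof: the conjugacy transfer via a continuous positive minorant of the pull-back radius (the paper quotes Lemma 2.7 of \cite{lny} for exactly this, and adjusts $\hat\delta$ so that $\hat\delta<\delta\circ h^{-1}$ to order the $\beta$'s), the two-sided comparison with constant scales using $\min_X\delta>0$ in the compact case, and the substitution $s'=as$ for the power formula. The genuine divergence is in item (4). The paper tames the position-dependent threshold with the relation $\gamma\ll\rho$ of Lemma \ref{power} (together with Remark 3.2 of \cite{lny}), producing for each $\delta$ an auxiliary $\delta'\ll\frac14\gamma$ with $\gamma\ll\delta$ (Lemma \ref{le2}), respectively $\delta'\ll\delta$ (Lemma \ref{le1}), and proving $S(t,\delta,K)\le R(t,\delta',K)$ and $R(t,\delta,K)\le S(t,\delta',K)$, the $\beta$-comparison coming simply from $\delta'<\delta$. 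You instead regularize: the inf-convolution $\hat\delta(p)=\inf_q\left[\delta(q)+d(p,q)\right]$ is $1$-Lipschitz with $0<\hat\delta\le\delta$ (positivity holds because a minimizing sequence with $\delta(q_n)\to0$ would have to converge to $p$, contradicting $\delta(p)>0$), and lowering the scale can only increase $S$, increase $R$ and decrease $\beta$, so the $1$-Lipschitz scales are cofinal in both suprema; the bound $\delta(q')\ge(1-c)\delta(q)$ whenever $d(q,q')\le c\,\delta(q)$ then replaces the $\ll$-bookkeeping and yields $S(t,\delta,K)\le R(t,\tfrac14\delta,K)$ and $R(t,\delta,K)\le S(t,\tfrac12\delta,K)$, the constant factors being harmless since $\beta(t,c\delta,K)=c\,\beta(t,\delta,K)$. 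Your route is more self-contained and quantitative (no appeal to Lemma \ref{power} or to Remark 3.2 of \cite{lny}, and the auxiliary scale is an explicit multiple of the original one); the paper's route avoids any regularization step and reuses the $\ll$-machinery it already needs for Lemma \ref{p1}. Both arguments close.
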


The first two items say that $e^*(\phi)$ is invariant under topological conjugacy and reduces to the topological entropy in the compact case.
Item (3) is a version of the power formula for the topological entropy for maps
while the last item says that $e^*(\phi)$ satisfies the so-called Bowen-Dinaburg formula
\cite{b}, \cite{di}.

Finally, we say that $\phi$ is {\em dynamically isolated at infinity} if there is a compact subset $K\subset X$ such that
$$
\bigcap_{t\in\mathbb{R}}\phi_t(X\setminus K)\subset Sing(X).
$$
Equivalently, if there is a compact subset $K\subset X$ intersection all regular (i.e. nonsingular) orbits of $\phi$. Obviously every flow of a compact metric space is dynamically isolated at infinity. It is easy to construct flows which are not dynamically isolated at infinity.

With these notations we can state the following result.

\begin{thm}
\label{thB}
Let $\phi$ be a topological expansive flow of a metric space.
If $\phi$ is dynamically isolated at infinity, then
$$
\limsup_{t \to \infty} \frac{1}{t} \log v(t) \leq e^*(\phi).
$$
\end{thm}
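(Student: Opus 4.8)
The plan is to realize $v(t)$ as the cardinality of a separating set inside a single compact set and then read the bound off the definition of $e^*$. First I would invoke dynamical isolation at infinity to fix a compact $K\subset X$ meeting every regular orbit. Since a singularity has no minimal positive period, every periodic point is regular, so every periodic orbit meets $K$. I then fix $\epsilon>0$ and let $\delta\in C^+(X)$ be the function provided by topological expansivity (Definition \ref{def1}); replacing $\delta$ by $\min\{\delta,1\}$, which still witnesses topological expansivity and lies in $C^+(X)$, I may assume $\delta\le 1$, so that $\beta(t,\delta,K)\le 1$ and hence $S(t,\delta,K)\le S(t,\delta,K)/\beta(t,\delta,K)$ for every $t$.

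Next I would set up the counting. For each periodic orbit $\gamma$ of period $\le t$ choose a representative $x_\gamma\in\gamma\cap K$, and put $E_t=\{x_\gamma\}$. Distinct periodic orbits are disjoint, so $|E_t|=v(t)$ and $E_t\subset K$. The theorem then reduces to the claim that $E_t$ is a $(t,\delta,K)$-separating set: granting this, $v(t)=|E_t|\le S(t,\delta,K)\le S(t,\delta,K)/\beta(t,\delta,K)$, whence
$$
\limsup_{t\to\infty}\frac1t\log v(t)\le \limsup_{t\to\infty}\frac1t\log\frac{S(t,\delta,K)}{\beta(t,\delta,K)}\le e^*(\phi,K)\le e^*(\phi),
$$
which is the desired inequality. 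Here the separation time equals the period bound $t$, matching the fact that an orbit of period $\le t$ is traversed completely within the window $[0,t]$; keeping these two scales equal is what prevents the final $\limsup$ from being degraded.

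To prove the separation claim I would argue by contradiction via topological expansivity. Suppose $\gamma_1\ne\gamma_2$ have periods $p_1,p_2\le t$ but their representatives fail to separate, i.e. $d(\phi_s(x_{\gamma_1}),\phi_s(x_{\gamma_2}))<\delta(\phi_s(x_{\gamma_1}))$ for all $s\in[0,t]$. Because both orbits are fully traversed on $[0,t]$, the goal is to promote this finite-time, identity-parametrized tracing to a global tracing $d(\phi_s(x_{\gamma_1}),\phi_{\alpha(s)}(x_{\gamma_2}))<\delta(\phi_s(x_{\gamma_1}))$ for all $s\in\mathbb R$, for a suitable continuous $\alpha$ with $\alpha(0)=0$; Definition \ref{def1} then forces $x_{\gamma_2}\in\phi_{[-\epsilon,\epsilon]}(x_{\gamma_1})\subset\gamma_1$, contradicting $\gamma_1\ne\gamma_2$ (any $\epsilon$ suffices, since $\phi_{[-\epsilon,\epsilon]}(x_{\gamma_1})$ lies on $\gamma_1$). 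I would build $\alpha$ by taking it to be the identity on one full period and extending it so that $s\mapsto\phi_{\alpha(s)}(x_{\gamma_2})$ acquires the same period $p_1$ as $s\mapsto\phi_s(x_{\gamma_1})$, whereupon the $\delta$-inequality, already known on $[0,t]\supseteq[0,p_1]$, propagates to all of $\mathbb R$ by periodicity of both sides.

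The main obstacle is precisely this construction when $p_1\ne p_2$: the identity reparametrization closes up only for $\gamma_1$, while matching the period of $\gamma_2$ destroys the given identity-tracing. I expect to resolve it by first showing that non-separation over the full window $[0,t]$ forces $\gamma_2$ to nearly recur to itself after time $p_1$—an incompatibility with $p_1<p_2$ unless the phases align—so that, after re-choosing the phase of the representatives, one may take the period-matching $\alpha$ to be a piecewise-linear homeomorphism of $\mathbb R$ along which the strict $\delta$-inequality persists, with continuity of $\delta$ and compactness of $K$ supplying the uniform control. Verifying that this $\alpha$ genuinely satisfies the hypothesis of Definition \ref{def1} on all of $\mathbb R$, while keeping the separation time equal to $t$, is the technical heart of the argument.
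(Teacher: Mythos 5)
Your reduction has a genuine gap at its core: the claim that the set $E_t$ of representatives of \emph{all} periodic orbits of period $\le t$ is $(t,\delta,K)$-separating, with $\delta$ the expansivity function itself. You correctly identify the obstruction --- for two orbits of different periods $p_1\ne p_2$, the identity parametrization on $[0,t]$ cannot be promoted to a global reparametrization $\alpha$ without destroying the $\delta$-tracing --- but the proposed fix (``non-separation forces $\gamma_2$ to nearly recur after time $p_1$, so the phases align and a piecewise-linear $\alpha$ works'') is not an argument. Near-recurrence of $\gamma_2$ after time $p_1$ does not contradict $p_1<p_2$ and does not produce a reparametrization along which the strict inequality $d(\phi_s(x_{\gamma_1}),\phi_{\alpha(s)}(x_{\gamma_2}))<\delta(\phi_s(x_{\gamma_1}))$ holds for all $s\in\mathbb{R}$; when $\alpha$ is forced to traverse $\gamma_2$ at a different speed to match period $p_1$, the hypothesis you actually have (closeness under the \emph{identity} parametrization) gives you no control. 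There is no reason to expect $v(t)\le S(t,\delta,K)$ to hold with a single separating set covering all periods up to $t$.

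The paper avoids exactly this by two devices you are missing. First, it groups periodic orbits by period into windows $[t_*-\tfrac{\beta}{2},\,t_*+\tfrac{\beta}{2}]$ of width $\beta=\beta(t,\alpha,K)$, so that any two competing orbits have periods differing by at most $\beta$; it then sums over the $O(t/\beta)$ windows, which costs only a factor $t$ (harmless after $\tfrac1t\log$) and a factor $1/\beta$ that is already built into the $S/\beta$ ratio defining $e^*$. Second, instead of constructing a continuous reparametrization directly, it applies Lemma \ref{p1}: from the failure of separation one builds bi-infinite time sequences $t_i=pa+q\beta$ and $u_i=pb+q\beta$ adapted to the two periods $a,b$, and the condition $|u_{i+1}-u_i|\le\alpha(\phi_{u_i}(y))$ is verifiable precisely because $|a-b|\le\beta$. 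Lemma \ref{p1} (the flow analogue of Bowen--Walters' discrete reparametrization lemma) then supplies the continuous $\alpha$ and the contradiction. Without the period-window decomposition and without a lemma of this type, the technical heart of your argument --- which you yourself flag as unresolved --- cannot be completed.
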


It follows from theorems \ref{thA} and \ref{sa} that the above inequality reduces to Bowen-Walters's inequality \cite{bw} on compact metric spaces.

The remainder of the paper is organized as follows.
In Section \ref{sec2}, we prove some preliminary facts.
In Section \ref{sec3}, we prove the theorems.

\section{Preliminary lemmas}
\label{sec2}

\noindent
As is well-known, every singularity of an expansive flow of a metric space is an isolated point of the space \cite{bw}.
The analogous fact holds for topological expansive flows.

\begin{lemma}
\label{alacran}
Every singularity of a topological expansive flow $\phi$ of a metric space $X$ is an isolated point of $X$.
\end{lemma}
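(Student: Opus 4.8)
The plan is to produce, for an arbitrary singularity $x$, an explicit radius $\rho_x>0$ with $B(x,\rho_x)=\{x\}$, and the natural candidate is the value at $x$ of a function furnished by topological expansivity. First I would fix any $\epsilon>0$ and apply Definition \ref{def1} to obtain $\delta\in C^+(X)$ witnessing topological expansivity for this $\epsilon$. Since $x$ is a singularity we have $\phi_t(x)=x$ for every $t\in\mathbb{R}$, so in particular the orbit segment collapses to a single point: $\phi_{[-\epsilon,\epsilon]}(x)=\{x\}$. This is the structural feature that makes the conclusion of the definition so strong at a singularity.

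Next I would set $\rho_x=\delta(x)$, which is strictly positive because $\delta\in C^+(X)$, and check that $B(x,\rho_x)=\{x\}$. Take any $y$ with $d(x,y)<\delta(x)$ and feed the pair $(x,y)$ into the definition using the constant reparametrization $\alpha\equiv 0$; this is admissible since it is continuous and satisfies $\alpha(0)=0$. Because $\phi_t(x)=x$ and $\phi_{\alpha(t)}(y)=\phi_0(y)=y$ for all $t$, the comparison inequality required by the definition reduces to the single condition $d(x,y)<\delta(x)$, which holds by the choice of $y$. Topological expansivity then yields $y\in\phi_{[-\epsilon,\epsilon]}(x)=\{x\}$, that is $y=x$, so the ball $B(x,\delta(x))$ contains only $x$.

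The argument has no genuine analytic obstacle; its content lies in two simple observations, namely that at a singularity both the orbit segment $\phi_{[-\epsilon,\epsilon]}(x)$ and the trajectory $t\mapsto\phi_t(x)$ degenerate to the point $x$, and that the trivial reparametrization $\alpha\equiv 0$ turns the dynamical hypothesis into a plain metric ball condition. The only points requiring care are that $\epsilon$ may be chosen once and for all, since $\phi_{[-\epsilon,\epsilon]}(x)=\{x\}$ regardless of the value of $\epsilon$, and that the radius $\rho_x=\delta(x)$ is allowed to depend on $x$, which is harmless because being an isolated point is a pointwise property.
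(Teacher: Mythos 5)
Your proposal is correct and is essentially identical to the paper's proof: both apply the definition with the constant reparametrization $\alpha\equiv 0$, note that at a singularity the trajectory and the orbit segment $\phi_{[-\epsilon,\epsilon]}(x)$ collapse to $\{x\}$, and conclude that $B(x,\delta(x))=\{x\}$. The only cosmetic difference is that the paper fixes $\epsilon=1$ while you keep $\epsilon$ arbitrary, which changes nothing.
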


\begin{proof}
Given $\sigma\in Sing(\phi)$ take $\delta\in C^+(X)$ from the definition for $\epsilon=1$. Hence, if $x\in B(\sigma,\delta(\sigma))$,
the continuous map $\alpha:\mathbb{R}\to\mathbb{R}$ fixing $0$ defined by
$\alpha(t)=0$ for all $t\in\mathbb{R}$ satisfies
$d(\phi_t(\sigma),\phi_{\alpha(t)}(x))=d(\sigma,x)<\delta(\sigma)=\delta(\phi_t(\sigma))$ for all $t\in\mathbb{R}$ so $x\in \phi_{[-1,1]}(\sigma)=\{\sigma\}$ thus
$B(\sigma,\delta(\sigma))=\{\sigma\}$ proving the result.
\end{proof}

Given a flow $\phi$ of a metric space $X$, we say that $A\subset X$ is {\em invariant} if $\phi_t(A)=A$ for all $t\in\mathbb{R}$. In such a case, the restriction of $\phi$ to $A$ is the flow $\phi|_A$ of $A$ defined by
$(\phi|_A)_t(a)=\phi_t(a)$ for all $a\in A$.

The next lemma gives a sufficient condition for topological expansivity when restricted to the set of regular orbits.

\begin{lemma}
\label{meta}
If $\phi$ is a rescaling expansive flow of a metric space $X$, then
$\phi|_{X\setminus Sing(\phi)}$ is topological expansive.
\end{lemma}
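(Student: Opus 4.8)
The plan is to use the rescaling-expansive structure function directly, after restricting it to the regular part $Y := X \setminus \mathrm{Sing}(\phi)$. First I would record that $Y$ is invariant under $\phi$: since $\mathrm{Sing}(\phi)$ is an invariant set and each $\phi_t$ is a bijection of $X$, its complement satisfies $\phi_t(Y) = Y$ for all $t$. Hence $\phi|_Y$ is a genuine flow of $Y$, and for $x \in Y$ the orbit segment $\phi_{[-\epsilon,\epsilon]}(x)$ computed in $\phi|_Y$ coincides with the one computed in $\phi$. The metric comparisons appearing in the two definitions are likewise unchanged, since we simply restrict the metric of $X$ to $Y$.

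Next, given $\epsilon > 0$, I would take $\delta \in C_\phi(X)$ provided by rescaling expansivity for this same $\epsilon$. By definition of $C_\phi(X)$ we have $\delta^{-1}(0) = \mathrm{Sing}(\phi)$, so the restriction $\delta|_Y$ is continuous and \emph{strictly positive} on $Y$; that is, $\delta|_Y \in C^+(Y)$. This restricted function is the candidate structure function witnessing topological expansivity of $\phi|_Y$.

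Finally, I would verify the expansivity implication. Suppose $x, y \in Y$ and a continuous $\alpha : \mathbb{R} \to \mathbb{R}$ with $\alpha(0) = 0$ satisfy $d(\phi_t(x), \phi_{\alpha(t)}(y)) < \delta(\phi_t(x))$ for all $t \in \mathbb{R}$. Since a strict inequality implies the non-strict one, the pair $(x,y)$ together with $\alpha$ meets the hypothesis $d(\phi_t(x),\phi_{\alpha(t)}(y)) \le \delta(\phi_t(x))$ of rescaling expansivity, and therefore $y \in \phi_{[-\epsilon,\epsilon]}(x)$. This is exactly the conclusion required, so $\phi|_Y$ is topological expansive.

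As for the main obstacle, there is no serious analytic difficulty here: the entire content is the bookkeeping observation that a function vanishing exactly on $\mathrm{Sing}(\phi)$ becomes strictly positive once those points are deleted, thereby converting an admissible rescaling function in $C_\phi(X)$ into an admissible function in $C^+(Y)$, while the passage from the strict to the non-strict inequality runs in the favorable direction. The only point demanding care is checking that restricting attention to the invariant set $Y$ leaves the relevant orbit segments and distance comparisons intact, which is what the first paragraph secures.
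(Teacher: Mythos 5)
Your proposal is correct and follows essentially the same route as the paper: restrict the rescaling function $\delta\in C_\phi(X)$ to $X\setminus \mathrm{Sing}(\phi)$, note it lands in $C^+(X\setminus \mathrm{Sing}(\phi))$ since it vanishes exactly on the singularities, and pass from the strict to the non-strict inequality to invoke rescaling expansivity. The extra remarks on invariance of the regular part are harmless bookkeeping that the paper leaves implicit.
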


\begin{proof}
Let $\epsilon>0$ and $\hat{\delta}\in C_\phi(X)$ be given by the rescaling expansivity of $\phi$. Then, $\delta=\hat{\delta}|_{X\setminus Sing(\phi)}\in C^+(X\setminus Sing(\phi))$.
Let $x,y\in X\setminus Sing(\phi)$ and $s:\mathbb{R}\to\mathbb{R}$
be a continuous function fixing $0$ such that
$$
d((\phi|_{X\setminus Sing(\phi)})_t(x),(\phi|_{X\setminus Sing(\phi)})_{s(t)}(y))<\delta((\phi|_{X\setminus Sing(\phi)})_t(x)),
\qquad\forall t\in\mathbb{R}.
$$
Then, $d(\phi_t(x),\phi_{s(t)}(y))\leq\hat{\delta}(\phi_t(x))$ for all $t\in\mathbb{R}$
and so $y\in \phi_{[-\epsilon,\epsilon]}(x)$ by the choice of $\hat{\delta}$.
This completes the proof.
\end{proof}

Another sufficient condition for topological expansivity is given below.

\begin{lemma}
\label{chuzo}
If $\phi$ is a flow of a metric space $X$ exhibiting a disjoint union
$X=A\cup B$ formed by closed invariant subset, then $\phi$ is topological expansive if and only if $\phi|_A$ and $\phi|_B$ are topological expansive.
\end{lemma}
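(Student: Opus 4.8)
The plan is to prove both implications, using first the structural remark that since $A$ and $B$ are disjoint closed sets with $A\cup B=X$, each is the complement of the other and hence both are clopen in $X$. I will lean on this throughout: a function $X\to(0,\infty)$ is continuous precisely when its restrictions to $A$ and to $B$ are continuous, so I may build elements of $C^+(X)$ piecewise and freely pass between $\phi$ and its restrictions. The forward implication then reduces to the general fact that restricting a topological expansive flow to a closed invariant set preserves the property. Given $\epsilon>0$, take $\delta\in C^+(X)$ from Definition~\ref{def1} and set $\delta_A=\delta|_A\in C^+(A)$. If $x,y\in A$ and a continuous $\alpha$ fixing $0$ satisfy the rescaled inequality for $\phi|_A$, then invariance of $A$ gives $(\phi|_A)_t=\phi_t$ on $A$, so the same inequality holds for $\phi$ with $\delta$; topological expansivity of $\phi$ yields $y\in\phi_{[-\epsilon,\epsilon]}(x)$, and invariance places this segment inside $A$, so $y\in(\phi|_A)_{[-\epsilon,\epsilon]}(x)$. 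The identical argument applies to $B$.

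The substantive direction is the converse. Given $\epsilon>0$, choose $\delta_A\in C^+(A)$ and $\delta_B\in C^+(B)$ witnessing topological expansivity of $\phi|_A$ and $\phi|_B$. I would define $\delta\in C^+(X)$ by $\delta(x)=\min\{\delta_A(x),d(x,B)\}$ for $x\in A$ and $\delta(x)=\min\{\delta_B(x),d(x,A)\}$ for $x\in B$. Since $B$ is closed and disjoint from $A$, one has $d(x,B)>0$ for each $x\in A$, and $x\mapsto d(x,B)$ is continuous; thus each branch is continuous and strictly positive, and by the clopen-partition remark $\delta\in C^+(X)$. Now suppose $x,y\in X$ and a continuous $\alpha$ fixing $0$ satisfy $d(\phi_t(x),\phi_{\alpha(t)}(y))<\delta(\phi_t(x))$ for all $t\in\mathbb{R}$.

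The argument finishes by splitting on where $x$ and $y$ lie. If they lie in different pieces, say $x\in A$ and $y\in B$, then evaluating at $t=0$ gives $d(x,y)<\delta(x)\le d(x,B)\le d(x,y)$, a contradiction; the case $x\in B$, $y\in A$ is symmetric. Hence $x$ and $y$ belong to the same piece, say $A$. By invariance $\phi_t(x)\in A$ for all $t$, so $\delta(\phi_t(x))\le\delta_A(\phi_t(x))$, and the hypothesis becomes $d((\phi|_A)_t(x),(\phi|_A)_{\alpha(t)}(y))<\delta_A((\phi|_A)_t(x))$ for all $t$; topological expansivity of $\phi|_A$ then gives $y\in(\phi|_A)_{[-\epsilon,\epsilon]}(x)\subset\phi_{[-\epsilon,\epsilon]}(x)$, and symmetrically when both points lie in $B$. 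This establishes topological expansivity of $\phi$.

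The only delicate point — and the step I expect to carry the weight — is arranging that the single function $\delta$ simultaneously belongs to $C^+(X)$ and rules out any matching of orbits across the partition. Capping $\delta$ by the distance to the opposite (closed) piece resolves both demands at once: positivity is guaranteed because $A$ and $B$ are closed and disjoint, while the same bound forces an immediate contradiction at $t=0$ in the mixed cases, collapsing the problem to the two single-piece situations already covered by the hypotheses.
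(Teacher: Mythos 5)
Your proof is correct and follows essentially the same strategy as the paper's: necessity via heredity of topological expansivity to invariant subsets, and sufficiency by capping $\delta_A,\delta_B$ with a continuous positive function that forbids matching orbit points across the partition. The only differences are cosmetic: you use the explicit distance functions $d(\cdot,B)$ and $d(\cdot,A)$ where the paper invokes Hurley's Lemma~1 to produce the separating function $\hat{\delta}$, and you rule out the mixed case already at $t=0$ (then invoke invariance), which is equally valid.
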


\begin{proof}
It is easy to prove that the topological expansivity is hereditary namely
the restriction of a topological expansive flow to any invariant set of it is topological expansive.
This implies the necessity.
For the sufficiency we argue as follows.

Suppose that both $\phi|_A$ and $\phi|_B$ are topological expansive.
Given $\epsilon>0$ let $\delta_A\in C^+(A)$ and $\delta_B\in C^+(B)$ be given by the topological expansivity of $\phi|_A$ and $\phi|_B$ respectively.
By Lemma 1 in \cite{h} there is $\hat{\delta}\in C^+(X)$ such that
$$
B(a,\hat{\delta}(a))\cap B=\emptyset \quad(\forall a\in A)\quad\mbox{ and }\quad B(b,\hat{\delta}(b))\cap A=\emptyset\quad (\forall b\in B).
$$
Defining
$$
\delta(x) = \left\{
\begin{array}{rcl}
\min\{\delta_A(x),\hat{\delta}(x)\}, & \mbox{if} & x\in A\\
& & \\
\min\{\delta_B(x),\hat{\delta}(x)\},& \mbox{if} & x\in B
\end{array}
\right.
$$
we have a map $\delta:X\to (0,\infty)$.
We prove that $\delta$ is continuous.
Let $x_n\in X$ be a sequence converging to some $x\in X$.
Then, $x$ belongs to either $A$ or $B$. In the first case,
$x_n\in B(x,\hat{\delta}(x))$ for all $n$ large so $x_n\in A$ for $n$ large
hence
$\phi(x_n)=\min\{\delta_A(x_n),\hat{\delta}(x_n)\}\to\min\{\delta_A(x),\hat{\delta}(x)\}=\phi(x)$.
Likewise, $\phi(x_n)\to\phi(x)$ if $x\in B$ thus $\phi$ is continuous.

Now, suppose that $x,y\in X$ and a continuous map $s:\mathbb{R}\to\mathbb{R}$ fixing $0$ satisfy
$d(\phi_t(x),\phi_{s(t)}(y))<\delta(\phi_t(x))$ for all $t\in\mathbb{R}$.
Again either $x\in A$ or $x\in B$.
In the first case $\phi_t(x)\in A$ (since $A$ is invariant) hence
$\phi_{s(t)}(y)\in A$ for all $t\in\mathbb{R}$
since $\delta\leq\hat{\delta}$. It follows that
$d((\phi|_A)_t(x),(\phi|_A)_{s(t)}(y))<\delta_A((\phi|_A)_t(x))$ for all $t\in\mathbb{R}$ so $y\in (\phi|_A)_{[-\epsilon,\epsilon]}(x)$ thus
$y\in \phi_{[-\epsilon,\epsilon]}(x)$. Likewise, $y\in \phi_{[-\epsilon,\epsilon]}(x)$ whenever $x\in B$ proving the result.
\end{proof}

Next, we introduce the Dowker lemma.
Recall that a map $\beta:X\to\mathbb{R}$,
where \(X\) is a topological space, is said to be \emph{lower semicontinuous} (respectively, \emph{upper semicontinuous}) if for every \(x\in X\) and every \(\epsilon\in (0,\infty)\) there exists a neighborhood \(U\) of \(x\) such that 
\[
\beta(y)>\beta(x)-\epsilon \quad (\text{respectively, } \beta(y)<\beta(x)+\epsilon)
\]
for every \(y\in U\). Given a metric space $X$ and \(\rho, \gamma\in C^+(X)\) we write  
\(\gamma < \rho\) whenever \(\gamma(x) < \rho(x)\) for all \(x \in X\).

\begin{lemma}[\emph{Dowker lemma}, \cite{d}, \cite{h} or p. 172 of \cite{k}]
\label{dl}
Let \(X\) be a metric space. If \(\beta,\gamma:X\to \mathbb{R}\) are, respectively, lower semicontinuous and upper semicontinuous functions and \(\gamma<\beta\), then there exists a continuous function \(\alpha:X\to\mathbb{R}\) such that $\gamma<\alpha<\beta$.
\end{lemma}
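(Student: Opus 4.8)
The plan is to prove this insertion statement directly, exploiting that every metric space is paracompact. This route avoids the one-sided approximation schemes (for instance Lipschitz inf-/sup-convolutions $\beta_{(n)}(x)=\inf_y[\beta(y)+n\,d(x,y)]$ and $\gamma^{(n)}(x)=\sup_y[\gamma(y)-n\,d(x,y)]$), which only produce continuous functions lying \emph{below} $\beta$ or \emph{above} $\gamma$ and do not by themselves yield a single continuous function squeezed strictly between the two.

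First I would fix, for each point $p\in X$, a real number $q_p$ with $\gamma(p)<q_p<\beta(p)$; this is possible because $\gamma(p)<\beta(p)$. Since $\gamma$ is upper semicontinuous the set $\{x:\gamma(x)<q_p\}$ is open, and since $\beta$ is lower semicontinuous the set $\{x:\beta(x)>q_p\}$ is open. Hence
$$
U_p=\{x\in X:\gamma(x)<q_p\}\cap\{x\in X:\beta(x)>q_p\}
$$
is an open neighborhood of $p$ on which $\gamma(x)<q_p<\beta(x)$ holds for \emph{every} $x$. The family $\{U_p\}_{p\in X}$ is then an open cover of $X$.

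Next I would invoke paracompactness of the metric space $X$ (via Stone's theorem) to obtain a locally finite partition of unity $\{\psi_i\}_{i\in I}$ subordinate to $\{U_p\}$: each $\psi_i:X\to[0,1]$ is continuous, $\sum_i\psi_i\equiv 1$, the supports $\mathrm{supp}(\psi_i)$ are locally finite, and for each $i$ there is $p_i$ with $\mathrm{supp}(\psi_i)\subset U_{p_i}$. Writing $q_i=q_{p_i}$, I would set
$$
\alpha=\sum_{i\in I}\psi_i\,q_i.
$$
Local finiteness makes this a locally finite sum, so $\alpha:X\to\mathbb{R}$ is continuous. Fixing $x\in X$, any index $i$ with $\psi_i(x)>0$ satisfies $x\in\mathrm{supp}(\psi_i)\subset U_{p_i}$, hence $\gamma(x)<q_i<\beta(x)$; thus $\alpha(x)$ is a finite convex combination (total weight one, positive active weights) of numbers all lying in the open interval $(\gamma(x),\beta(x))$, which forces $\gamma(x)<\alpha(x)<\beta(x)$.

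The step I expect to require the most care is precisely the passage from merely semicontinuous data to a genuinely continuous function: the locally valid constants $q_p$ must be amalgamated so that continuity \emph{and} both strict inequalities survive at once. The convex-combination structure supplied by the subordinate partition of unity is what makes this work—each value $\alpha(x)$ is an average of reals trapped inside $(\gamma(x),\beta(x))$ and so stays strictly inside that interval—so the only nontrivial ingredient is the existence of a locally finite subordinate partition of unity, i.e. the paracompactness of $X$. (Boundedness of $\gamma,\beta$ is not needed here, since the $q_i$ are finite reals and the sum is locally finite; otherwise one could reduce to the bounded case by conjugating with an increasing homeomorphism $\mathbb{R}\to(0,1)$, which preserves upper/lower semicontinuity and the inequality $\gamma<\beta$.)
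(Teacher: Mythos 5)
Your proof is correct. Note, however, that the paper does not actually prove this lemma: it is quoted as a known result with references to Dowker, Hurley, and Kelley, so there is no in-paper argument to compare against. Your partition-of-unity argument is a clean, self-contained proof and every step checks out: upper semicontinuity of $\gamma$ and lower semicontinuity of $\beta$ make each $U_p=\{\gamma<q_p\}\cap\{\beta>q_p\}$ open, Stone's theorem gives paracompactness and hence a locally finite subordinate partition of unity, the locally finite sum $\alpha=\sum_i\psi_i q_i$ is continuous, and at each $x$ the active weights place $\alpha(x)$ in the convex hull of finitely many points of the open interval $(\gamma(x),\beta(x))$, hence strictly inside it. This route is worth contrasting with the cited sources: Dowker's original theorem (and the treatment in Kelley) establishes the insertion property as a \emph{characterization} of normal countably paracompact spaces, via a more delicate argument with countable open covers and normality, precisely because in that generality one cannot assume partitions of unity subordinate to arbitrary open covers. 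By restricting to metric spaces — which is all the paper needs — you trade that machinery for paracompactness and get a shorter, more transparent proof; the cost is only that your argument does not recover the general topological characterization. Your closing remark that boundedness of $\gamma,\beta$ is irrelevant here is also accurate, since the sum is locally finite with real coefficients.
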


A first application of the Dowker lemma is as follows.

\begin{lemma}
\label{pura}
Let $\phi$ be a flow all of whose singularities are isolated points of $X$. Then,
there is $r\in C^+(X)$ such that
$$
\bigcap_{t\in\mathbb{R}}\phi_t(B(x,r(x))\subset \{x\},\quad\quad\forall x\in X.
$$
\end{lemma}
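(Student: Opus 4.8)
The plan is to obtain $r$ as a continuous positive minorant of the function
$$\rho(x)=\inf_{z\neq x}\ \sup_{t\in\mathbb R} d(x,\phi_t(z)),$$
using the Dowker lemma (Lemma \ref{dl}). First I would record the reformulation of the conclusion: since $z\in\phi_t(B(x,r))$ if and only if $\phi_{-t}(z)\in B(x,r)$, intersecting over $t$ gives $z\in\bigcap_{t}\phi_t(B(x,r))$ if and only if $\phi_{\mathbb R}(z)\subseteq B(x,r)$. Consequently, if $0<r<\rho(x)$ then for every $z\neq x$ we have $\sup_t d(x,\phi_t(z))\ge\rho(x)>r$, so some $\phi_{t_0}(z)\notin B(x,r)$; hence no $z\neq x$ has its whole orbit inside $B(x,r)$, i.e. $\bigcap_t\phi_t(B(x,r))\subseteq\{x\}$. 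Thus it suffices to produce $r\in C^+(X)$ with $r<\rho$ everywhere.

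Next I would prove that $\rho>0$ everywhere, which is exactly where the hypothesis enters. Suppose $\rho(x)=0$ and choose $z_n\neq x$ with $\sup_t d(x,\phi_t(z_n))\to0$. Then $z_n=\phi_0(z_n)\to x$, and for each fixed $t$ both $\phi_t(z_n)\to\phi_t(x)$ (continuity of the flow) and $d(x,\phi_t(z_n))\to0$, forcing $\phi_t(x)=x$ for all $t$, so $x\in\mathrm{Sing}(\phi)$. By hypothesis $x$ is then an isolated point, contradicting $z_n\to x$ with $z_n\neq x$. Hence $\rho(x)>0$ for every $x$.

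The main step, and the place I expect the real work, is the lower semicontinuity of $\rho$. Fix $x$, a finite $c<\rho(x)$, and suppose for contradiction that $y_n\to x$ with $\rho(y_n)\le c$. Choosing $z_n\neq y_n$ with $\sup_t d(y_n,\phi_t(z_n))\le c+1/n$, the triangle inequality gives $\sup_t d(x,\phi_t(z_n))\le d(x,y_n)+\sup_t d(y_n,\phi_t(z_n))\to c$. The delicate point is excluding $z_n=x$: since $y_n\neq x$ (otherwise $\rho(y_n)=\rho(x)>c$), the point $x$ is non-isolated, hence regular, so choosing $w$ on the orbit of $x$ with $w\neq x$ yields $\rho(x)\le\sup_t d(x,\phi_t(w))=D(x)$, where $D(x)=\sup_t d(x,\phi_t(x))$. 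But $z_n=x$ would give $\sup_t d(y_n,\phi_t(x))\to D(x)$, whence $D(x)\le c<\rho(x)\le D(x)$, a contradiction; so $z_n\neq x$ for large $n$. For such $n$ we get $\rho(x)\le\sup_t d(x,\phi_t(z_n))\to c$, i.e. $\rho(x)\le c$, contradicting $c<\rho(x)$. Hence $\rho$ is lower semicontinuous.

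Finally I would invoke the Dowker lemma with $\gamma\equiv0$ (upper semicontinuous) and $\beta=\min\{\rho,1\}$, which is lower semicontinuous, finite, and satisfies $\gamma<\beta$ because $\rho>0$. This produces a continuous $r:X\to\mathbb R$ with $0<r<\min\{\rho,1\}\le\rho$, so that $r\in C^+(X)$ and $r<\rho$; by the first paragraph $\bigcap_t\phi_t(B(x,r(x)))\subseteq\{x\}$ for all $x$, as desired. The principal obstacle is the semicontinuity argument, specifically the bookkeeping around the case $z_n=x$, which is precisely where the standing hypothesis that singularities be isolated points is used.
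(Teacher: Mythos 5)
Your proof is correct and follows essentially the same route as the paper: both extract a lower semicontinuous ``critical radius'' function and pass to a continuous positive minorant via the Dowker lemma. The only substantive difference is that your $\rho$ excludes $z=x$ from the infimum, which lets you treat singular and regular points uniformly and cap at $1$ to avoid finiteness issues (the paper instead splits cases, setting $\beta(\sigma)=\rho_\sigma$ at singularities and making a separate reduction for finiteness), at the cost of the extra $z_n=x$ case in the semicontinuity argument, which you handle correctly using the hypothesis that singularities are isolated.
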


\begin{proof}
We can assume that there is $x_*\in X$ such that
$$
\Lambda_*=\bigcap_{t\in\mathbb{R}}\phi_t(B(x_*,1))\neq\emptyset.
$$
Otherwise, the constant function $r(x)=1$ ($\forall x\in X$) satisfies the conclusion of the lemma.

Since every $x\in Sing(\phi)$ is an isolated point,
for all such $x$ there is $\rho_x>0$ such that
$B(x,\rho(x))=\{x\}$.

Define
$$
\beta(x) = \left\{
\begin{array}{rcl}
\inf\left\{r>0\mid \displaystyle\bigcap_{t\in\mathbb{R}}\phi_t(B(x,r))\neq\emptyset\right\}, & \mbox{if} & x\in X\setminus Sing(\phi)\\
& & \\
\rho_x,& \mbox{if} & x\in Sing(X).
\end{array}
\right.
$$

We have $\beta(x)<\infty$ for all $x\in X$. In fact,
given $x\in X$ there is $r_x\in (0,\infty)$ large enough such that $B(x,r_x)$ covers $\Lambda_*$.
Then,
$$
\emptyset\neq\Lambda_*\subset \bigcap_{t\in\mathbb{R}}\phi_t(B(x,r_x))
$$
and so $\beta(x)\leq r_x<\infty$.
Clearly $ \beta(x)\geq0$ for all $x\in X$.
If $\beta(x)=0$ for some $x\in X$, then $x\in X\setminus Sing(\phi)$ and so
there is a sequence $y_n\in X$ such that
$d(\phi_t(y_n),x)<\frac{1}n$ for all $n\in\mathbb{N}$ and $t\in\mathbb{R}$.
In particular, $y_n\to x$ as $n\to\infty$ and also $\phi_t(y_n)\to x$ as $n\to\infty$ for all $t\in \mathbb{R}$.
Then, for all $t\in\mathbb{R}$ one has
$$
d(\phi_t(x),x)\leq d(\phi_t(x),\phi_t(y_n))+d(\phi_t(y_n),x)\to0\quad\mbox{ as }\quad n\to\infty
$$
so $\phi_t(x)=x$ for all $t\in\mathbb{R}$ thus $x\in Sing(\phi)$ which is absurd.
Therefore, $\beta(x)>0$ for all $x\in X$.

In this way, we have a map $\beta:X\to(0,\infty)$.
Let us prove that this map is lower semicontinuous.
Otherwise, there are $\epsilon>0$, $x\in X$ and a sequence $x_n\to x$ such that
$\beta(x)-\epsilon\geq \beta(x_n)$ for all $n\in\mathbb{N}$.
Take $r=\beta(x)-\frac{\epsilon}2$ then $\beta(x_n)<r$ for all $n\in\mathbb{N}$ so
there is a sequence $y_n\in X$ such that $d(\phi_t(y_n),x_n)<r$
for all $n\in\mathbb{N}$ and $t\in\mathbb{R}$
thus
$$
d(\phi_t(y_n),x)\leq d(\phi_t(y_n),x_n)+d(x_n,x)<r+d(x_n,x),\quad\quad\forall n\in\mathbb{N}, \, t\in\mathbb{R}.
$$
Since $x_n\to x$ and $r<\beta(x)-\frac{\epsilon}4$, we can choose $n$ large such that
$d(\phi_t(y_n),x)<\beta(x)-\frac{\epsilon}4$ for all $t\in\mathbb{R}$.
Then,
$$
y_n\in \bigcap_{t\in\mathbb{R}}\phi_t(B(x,\beta(x)-\frac{\epsilon}4))
\quad\mbox{ and so }\quad
\bigcap_{t\in\mathbb{R}}\phi_t(B(x,\beta(x)-\frac{\epsilon}4))\neq\emptyset
$$
contradicting the definition of $\beta(x)$. Therefore, $\beta$ is lower semicontinuous.

Then, by Dowker lemma applied to this $\beta$ and $\gamma=0$ we obtain
$r:X\to \mathbb{R}$ continuous such that $0<r<\beta$.
We have $r\in C^+(X)$ by the first inequality.

If $x\in Sing(\phi)$, $r(x)<\rho_x$ hence
$$
\bigcap_{t\in\mathbb{R}}\phi_t(B(x,r(x))\subset \bigcap_{t\in\mathbb{R}}\phi_t(B(x,\rho_x)=\{x\}.
$$
If $x\in X\setminus Sing(\phi)$, one has
$$
\bigcap_{t\in\mathbb{R}}\phi_t(B(x,r(x))=\emptyset\subset \{x\}
$$
by the definition of $\beta(x)$.
Thus, $r$ satisfies the desired property.
\end{proof}

A second application is as follows.

\begin{lemma}
\label{mcoy}
Let $\phi$ be a topological expansive flow of a metric space.
Then, for every $\delta\in C^+(X)$ there is $\alpha\in C^+(X)$ such that
\begin{equation}
\label{sistema}
\sup_{|\eta|\leq \alpha(x)}d(\phi_\eta(x),x)< \delta(x),\quad\quad\forall x\in X.
\end{equation}
\end{lemma}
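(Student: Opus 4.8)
The plan is to deduce the lemma from the Dowker lemma (Lemma \ref{dl}): I will exhibit a strictly positive, lower-semicontinuous ``ceiling'' below which the desired estimate holds, and then interpolate a continuous function strictly beneath it. To this end I would introduce, for $x\in X$ and $a\geq 0$, the auxiliary displacement function
$$
g(x,a)=\sup_{|\eta|\leq a}d(\phi_\eta(x),x),
$$
and record the elementary facts that $g(x,0)=d(x,x)=0$ and that $a\mapsto g(x,a)$ is nondecreasing and continuous on $[0,\infty)$.

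The key technical point is that $g$ is continuous. Lower semicontinuity is immediate, since for fixed $a$ the function $g(\cdot,a)$ is a supremum of the continuous maps $x\mapsto d(\phi_\eta(x),x)$. For upper semicontinuity I would use the joint continuity of $(x,\eta)\mapsto d(\phi_\eta(x),x)$ (which comes from the continuity of $(x,t)\mapsto\phi_t(x)$) together with the compactness of the time interval $[-a,a]$: given $x_n\to x$, a sequence of maximizers $\eta_n\in[-a,a]$ is bounded, so a convergent subsequence $\eta_n\to\eta^*\in[-a,a]$ gives $g(x_n,a)=d(\phi_{\eta_n}(x_n),x_n)\to d(\phi_{\eta^*}(x),x)\leq g(x,a)$, whence $\limsup_n g(x_n,a)\leq g(x,a)$.

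Next I would set
$$
\tau(x)=\sup\{a>0 : g(x,a)<\delta(x)\}\in(0,\infty].
$$
Since $g(x,0)=0<\delta(x)$ and $g(x,\cdot)$ is continuous, $\tau(x)>0$; and since $g(x,\cdot)$ is nondecreasing, $g(x,a)<\delta(x)$ holds for every $a<\tau(x)$. I would then verify that $\tau$ is lower semicontinuous: fixing any $a<\tau(x)$, the strict inequality $g(x,a)<\delta(x)$ persists on a neighborhood of $x$ because $g(\cdot,a)$ is upper semicontinuous and $\delta$ is continuous, so $\tau(y)\geq a$ for all $y$ near $x$. Truncating, $\beta=\min\{\tau,1\}$ is a real-valued, strictly positive, lower-semicontinuous function. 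Applying the Dowker lemma to $\beta$ and $\gamma\equiv 0$ (continuous, hence upper semicontinuous, with $0<\beta$) yields a continuous $\alpha:X\to\mathbb{R}$ with $0<\alpha<\beta$, so $\alpha\in C^+(X)$. Since $\alpha(x)<\beta(x)\leq\tau(x)$, the defining property of $\tau$ gives $g(x,\alpha(x))<\delta(x)$ for every $x$, which is exactly \eqref{sistema}.

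The main obstacle is the continuity of $g$, and specifically its upper semicontinuity. This is the one place where compactness genuinely enters, but it is the compactness of the time interval $[-a,a]$ rather than any compactness or local compactness of the (possibly noncompact) space $X$; the subsequence extraction of maximizers is what makes the argument go through uniformly in $x$. Once $g$ is known to be continuous, the lower semicontinuity of $\tau$ and the final Dowker interpolation are routine.
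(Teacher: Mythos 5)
Your proof is correct, but it takes a genuinely different route from the paper's. The paper's argument leans on the hypothesis of topological expansivity: it first invokes Lemma \ref{alacran} to make every singularity an isolated point, then Lemma \ref{pura} to produce the function $r$, replaces $\delta$ by $\hat{\delta}=\min\{r,\delta\}$, and splits the definition of the lower-semicontinuous ceiling $\beta$ into a singular case (where $\beta(x)=\rho_x$) and a regular case (where Lemma \ref{pura} is needed to rule out $\beta(x)=\infty$ before Dowker's lemma can be applied). You sidestep all of this by truncating your ceiling, $\beta=\min\{\tau,1\}$, which keeps it real-valued regardless of whether $\tau(x)=\infty$, and by working directly with the jointly continuous displacement $g(x,a)=\sup_{|\eta|\leq a}d(\phi_\eta(x),x)$; the monotonicity of $g(x,\cdot)$ then turns $\{a>0:g(x,a)<\delta(x)\}$ into an interval, so $\alpha(x)<\tau(x)$ immediately yields \eqref{sistema}. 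The technical core is the same in both proofs --- compactness of the time interval $[-a,a]$ plus an extraction of maximizers, appearing in the paper as the contradiction argument for lower semicontinuity of $\beta$ and in your write-up as upper semicontinuity of $g(\cdot,a)$, followed in both cases by the Dowker interpolation against $\gamma\equiv 0$. What your version buys is generality and economy: it proves the statement for an \emph{arbitrary} flow on a metric space, with no expansivity assumption, no appeal to Lemmas \ref{alacran} and \ref{pura}, and no case distinction at the singularities; what the paper's version buys is only that it reuses machinery (Lemma \ref{pura}) already set up for other purposes.
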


\begin{proof}
By Lemma \ref{alacran} we have that every singularity of $\phi$ is an isolated point of $X$.
Then, we can choose $r\in C^+(X)$ satisfying Lemma \ref{pura}.
Given $\delta\in C^+(X)$ we define $\hat{\delta}=\min\{r,\delta\}$ hence
$\hat{\delta}\in C^+(X)$ too.
Since every $x\in Sing(X)$ is an isolated point of $X$, for all such $x$ there is $\rho_x\in (0,\infty)$ such that $B(x,\rho_x)=\{x\}$.

For every $x\in X$ we define
$$
\beta(x) = \left\{
\begin{array}{rcl}
\sup\left\{\gamma>0\mid \displaystyle\sup_{|\eta|\leq \gamma}d(\phi_\eta(x),x)< \hat{\delta}(x)\right\}, & \mbox{if} & x\in X\setminus Sing(\phi)\\
& & \\
\rho_x,& \mbox{if} & x\in Sing(X).
\end{array}
\right.
$$
Clearly $\beta(x)>0$ for all $x\in X$.
Let us prove $\beta(x)<\infty$ for all $x\in X$.
Suppose not namely $\beta(x)=\infty$ for some $x\in X$.
It follows that
$\phi_\eta(x)\in B(x,r(x))$ for all $\eta\in\mathbb{R}$ so Lemma \ref{pura} implies
$$
x\in \bigcap_{\eta\in\mathbb{R}}\phi_\eta(B(x,r(x))\subset\{x\}.
$$
It follows that $x\in Sing(\phi)$ so $\beta(x)=\rho_x\in (0,\infty)$ which is absurd.
Therefore, $\beta(x)<\infty$.

In this way, we obtain a map $\beta:X\to (0,\infty)$.
We assert that this map is lower semicontinuous.

Otherwise, there are $x\in X$, a sequence $x_n\to x$ and $\epsilon>0$ such that
$$
\beta(x_n)\leq \beta(x)-\epsilon,\quad\quad\forall n\in\mathbb{N}.
$$
If $x\in Sing(\phi)$, we would have $x_n\in B(x,\rho_x)$ for all $n$ large so
$x_n=x$ for all $n$ large yielding $\rho_x\leq \rho_x-\epsilon$ which is absurd.
Then, $x\in X\setminus Sing(\phi)$ and we can further assume $x_n\in X\setminus Sing(\phi)$ for all $n\in\mathbb{N}$.
Since $\beta(x)-\epsilon<\beta(x)-\frac{\epsilon}2$, we can choose a number $\gamma$ such that
$$
\beta(x_n)<\gamma<\beta(x)-\frac{\epsilon}2,\quad\quad
\forall n\in\mathbb{N}.
$$
Then, the definition of $\beta(x_n)$ provides a sequence $\eta_n$ with $|\eta_n|\leq \gamma$ for all $n\in\mathbb{N}$ such that
$$
d(\phi_{\eta_n}(x_n),x_n)\geq \hat{\delta}(x_n),\quad\quad\forall n\in\mathbb{N}.
$$
Clearly, we can assume that $\eta_n\to \eta$ as $n\to\infty$ for some $|\eta|\leq \gamma$.
Since $\hat{\delta}$ is continuous, by letting $n\to\infty$ above we get
$$
d(\phi_\eta(x),x)\geq \hat{\delta}(x).
$$
However, $0<\gamma<\beta(x)$ so
$d(\phi_\eta(x),x)<\hat{\delta}(x)$ by the definition of $\beta(x)$ a contradiction. Therefore, $\beta$ is lower semicontinuous.

Then, we can apply Dowker lemma to this $\beta$ and $\gamma=0$ to obtain
$\alpha\in C^+(X)$ with $0<\alpha<\beta$.
Clearly \eqref{sistema} holds when $x\in Sing(X)$.
If $x\in X\setminus Sing(\phi)$, then $\phi_\eta(x)\in B(x,\hat{\delta}(x))\subset B(x,\delta(x))$ for all $|\eta|\leq \alpha(x)$ (since $\alpha(x)<\beta(x)$) thus \eqref{sistema} holds in this case as well. This completes the proof.
\end{proof}

Next, we introduce the following notation.
Given maps \(\rho, \gamma\in C^+(X)\) we write $\gamma\ll\rho$ whenever
$x\in X$ and $y\in B(x,\gamma(x))$ implies $\gamma(x)<\rho(y)$.
Clearly $\gamma\ll\rho$ implies $\gamma<\rho$.
With this notation we can reformulate Lemma 2.8 in \cite{lny} as follows.

\begin{lemma}
\label{power}
For every metric space $X$ and $\rho\in C^+(X)$ there exists $\gamma\in C^+(X)$ such that $\gamma\ll\rho$.
\end{lemma}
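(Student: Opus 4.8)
The plan is to mimic the Dowker-lemma strategy already used in Lemmas \ref{pura} and \ref{mcoy}: construct a lower semicontinuous function $\beta$ that records, at each point $x$, the largest radius for which the required inequality can be enforced, and then squeeze a continuous function strictly beneath it. Concretely, for $x\in X$ I would set
$$
\beta(x)=\sup\bigl\{t>0 : \rho(y)>t \text{ for all } y\in B(x,t)\bigr\}.
$$
Write $P(x,t)$ for the condition ``$\rho(y)>t$ for all $y\in B(x,t)$'' appearing here.

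First I would verify that $\beta$ is an honest map into $(0,\infty)$. Positivity follows from continuity of $\rho$ at $x$: since $\rho(x)>0$, for all sufficiently small $t$ one has $\rho>t$ on $B(x,t)$, so the defining set is nonempty. Finiteness is immediate, because $x\in B(x,t)$ forces $t<\rho(x)$ whenever $P(x,t)$ holds; hence $0<\beta(x)\le\rho(x)<\infty$. I would also record that $P(x,\cdot)$ is downward closed: if $P(x,t)$ holds and $t'<t$, then $B(x,t')\subset B(x,t)$ gives $\rho>t>t'$ on $B(x,t')$, so $P(x,t')$ holds. Consequently the set is an interval and all of $(0,\beta(x))$ is contained in it.

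The key step is lower semicontinuity of $\beta$, and this is where I expect the main (if modest) obstacle to lie, in the careful triangle-inequality comparison of balls centred at $x_n$ and at $x$. Fix $x$ and any $t<\beta(x)$, and choose $t'$ with $t<t'<\beta(x)$, so that $P(x,t')$ holds, i.e. $\rho>t'$ on $B(x,t')$. For $x_n\to x$ and $n$ large we have $d(x_n,x)<t'-t$, whence $B(x_n,t)\subset B(x,t')$ by the triangle inequality; therefore $\rho>t'>t$ on $B(x_n,t)$, i.e. $\beta(x_n)\ge t$. Letting $t\uparrow\beta(x)$ yields $\liminf_n\beta(x_n)\ge\beta(x)$, which is exactly lower semicontinuity.

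Finally I would apply the Dowker lemma (Lemma \ref{dl}) to the lower semicontinuous function $\beta$ and the (continuous, hence upper semicontinuous) function $0$, noting $0<\beta$, to obtain a continuous $\gamma:X\to\mathbb{R}$ with $0<\gamma<\beta$; in particular $\gamma\in C^+(X)$. For each $x$ we then have $0<\gamma(x)<\beta(x)$, so $\gamma(x)$ lies in the interval $(0,\beta(x))$ on which $P(x,\cdot)$ holds, meaning $\rho(y)>\gamma(x)$ for every $y\in B(x,\gamma(x))$. This is precisely the relation $\gamma\ll\rho$, completing the proof.
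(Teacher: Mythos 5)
Your proof is correct. Note that the paper itself gives no argument for this lemma---it simply cites Lemma 2.8 of \cite{lny}---so your write-up supplies a self-contained proof in exactly the style the paper uses elsewhere (Lemmas \ref{pura} and \ref{mcoy}): build a lower semicontinuous ``largest admissible radius'' function $\beta$, check $0<\beta<\infty$ pointwise, and squeeze a continuous $\gamma$ underneath via the Dowker lemma. All the individual steps check out: positivity of $\beta$ from continuity of $\rho$, finiteness from $x\in B(x,t)$, downward closedness of $P(x,\cdot)$ so that $(0,\beta(x))$ lies in the admissible set, and the ball inclusion $B(x_n,t)\subset B(x,t')$ for $d(x_n,x)<t'-t$ giving lower semicontinuity; the conclusion $\gamma(x)<\beta(x)$ then yields $P(x,\gamma(x))$, which is precisely $\gamma\ll\rho$.
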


We use this lemma in the proof of the following one (compare with Item (vi) of Theorem 3 in \cite{bw}):

\begin{lemma}
\label{p1}
For every topological expansive flow $\phi$ of a metric space $X$
and every $\epsilon>0$ there is $\alpha\in C^+(X)$ with the following property:
if $(t_i)_{i\in\mathbb{Z}}$ and $(u_i)_{i\in\mathbb{Z}}$ are bi-infinite sequences of real numbers with $t_0=u_0=0$ and $t_i\to\pm\infty$ as $i\to\pm\infty$, and if
if $x,y\in X$ satisfy
$0<t_{i+1}-t_i\leq\alpha(\phi_{t_i}(x))$, $|u_{i+1}-u_i|\leq\alpha(\phi_{u_i}(y))$ and 
$d(\phi_{t_i}(x),\phi_{u_i}(y))\leq\alpha(\phi_{t_i}(x))$ for all $i\in\mathbb{Z}$,
then $y\in \phi_{[-\epsilon,\epsilon]}(x)$.
\end{lemma}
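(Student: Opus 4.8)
The plan is to deduce the \emph{continuous} tracking hypothesis of Definition \ref{def1} from the discrete data, and then to invoke topological expansivity directly. Given $\epsilon>0$, let $\delta\in C^+(X)$ be the function supplied by topological expansivity for this $\epsilon$. Before constructing the claimed $\alpha$, I would first manufacture the auxiliary functions that will absorb the change of base point: by Lemma \ref{power} choose $\gamma\ll\delta$ and then $\mu\ll\gamma$; by Lemma \ref{mcoy} applied to $\mu/3\in C^+(X)$ choose $\alpha_0\in C^+(X)$ with $\sup_{|\eta|\le\alpha_0(x)}d(\phi_\eta(x),x)<\mu(x)/3$ for all $x$; and finally set $\alpha=\min\{\alpha_0,\mu/4\}\in C^+(X)$. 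This $\alpha$ will be the function asserted by the lemma (note that the symbol $\alpha$ here names the $C^+(X)$-function, while the reparametrization will be called $s$ to avoid the clash with the notation in Definition \ref{def1}).

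Next, from the data $(t_i)$, $(u_i)$, $x$, $y$ I would build a reparametrization $s:\mathbb{R}\to\mathbb{R}$ by linear interpolation. Since $0<t_{i+1}-t_i$ the sequence $(t_i)$ is strictly increasing, and since $t_i\to\pm\infty$ the intervals $[t_i,t_{i+1}]$ tile $\mathbb{R}$; hence setting $s(t)=u_i+\frac{t-t_i}{t_{i+1}-t_i}\,(u_{i+1}-u_i)$ for $t\in[t_i,t_{i+1}]$ defines a continuous map $s$ with $s(0)=u_0=0$, and for such $t$ the value $s(t)$ lies between $u_i$ and $u_{i+1}$, so $|s(t)-u_i|\le|u_{i+1}-u_i|$.

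The core of the argument is the pointwise estimate $d(\phi_t(x),\phi_{s(t)}(y))<\delta(\phi_t(x))$ for every $t$. Fixing $t\in[t_i,t_{i+1}]$ I would abbreviate $p=\phi_{t_i}(x)$, $q=\phi_{u_i}(y)$, $w=\phi_t(x)=\phi_a(p)$ with $a=t-t_i$, and $\phi_{s(t)}(y)=\phi_b(q)$ with $b=s(t)-u_i$, and split by the triangle inequality into the three terms $d(\phi_a(p),p)$, $d(p,q)$, and $d(q,\phi_b(q))$. The hypotheses give $0\le a\le\alpha(p)$, $d(p,q)\le\alpha(p)$, and $|b|\le\alpha(q)$, so the choice of $\alpha$ makes the first two terms each $<\mu(p)/3$ (using $\alpha\le\alpha_0$ and $\alpha\le\mu/4$) and the third $<\mu(q)/3<\gamma(q)/3$ (using $\mu<\gamma$).

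The delicate point, and the step I expect to be the main obstacle, is that $\delta$ must be evaluated at the \emph{moving} base point $w$, whereas the natural bounds live at $p$ and at $q$; a naive routing forces a comparison of $\gamma(p)$ with $\gamma(q)$ in the unfavorable direction, which the one-sided relation $\ll$ cannot supply. The resolution is to anchor all estimates at $q$. Indeed $d(p,q)\le\alpha(p)\le\mu(p)/4<\mu(p)$, so $\mu\ll\gamma$ yields $\mu(p)<\gamma(q)$; thus the first two terms together are $<\frac{2}{3}\mu(p)<\frac{2}{3}\gamma(q)$, and adding the third gives $d(w,\phi_b(q))<\gamma(q)$. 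In particular $d(w,q)<\gamma(q)$, so $\gamma\ll\delta$ gives $\gamma(q)<\delta(w)$, whence $d(\phi_t(x),\phi_{s(t)}(y))<\delta(\phi_t(x))$. Since this holds for all $t$ and $s$ is continuous with $s(0)=0$, topological expansivity through the chosen $\delta$ forces $y\in\phi_{[-\epsilon,\epsilon]}(x)$, completing the proof.
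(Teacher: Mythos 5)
Your proof is correct and follows essentially the same route as the paper's: build $\alpha$ from Lemma \ref{power} and Lemma \ref{mcoy}, interpolate linearly to get a continuous reparametrization $s$, split $d(\phi_t(x),\phi_{s(t)}(y))$ into the same three summands, and use the relation $\ll$ to move the bounds to the correct base point before invoking Definition \ref{def1}. The only difference is bookkeeping: by anchoring all three estimates at $q=\phi_{u_i}(y)$ and applying $\gamma\ll\delta$ once at the end, you get by with three auxiliary functions where the paper uses five ($\hat{\delta},\hat{\alpha},\check{\delta},\check{\alpha},\alpha$) and bounds each summand separately by $\tfrac13\delta(\phi_t(x))$ --- a genuine but minor streamlining.
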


\begin{proof}
Let $\epsilon>0$ and $\delta\in C^+(X)$ be given by the topological expansivity of $\phi$.
By  Lemma \ref{power} there is $\hat{\delta}\in C^+(X)$ such that
$
\hat{\delta}\ll \frac{1}3\delta.
$
Again Lemma \ref{power} together with Lemma \ref{mcoy} provides $\hat{\alpha}\in C^+(X)$ such that
$
\hat{\alpha}\ll \frac{1}2\hat{\delta}
$
and
\begin{equation}
\label{cha}
\sup_{|\eta|\leq \hat{\alpha}(z)}d(\phi_\eta(z),z)<\hat{\delta}(z),\quad\quad\forall z\in X.
\end{equation}

Choose $\check{\delta}\in C^+(X)$ with $\check{\delta}\ll \hat{\delta}$ and, as before, $\check{\alpha}\in C^+(X)$ such that
$
\check{\alpha}\ll \frac{1}2\check{\delta}
$
and
\begin{equation}
\label{chacho}
\sup_{|\eta|\leq \check{\alpha}(z)}d(\phi_\eta(z),z)<\check{\delta}(z),\quad\quad\forall z\in X.
\end{equation}
Finally, by Lemma \ref{power} once more, we can choose $\alpha\in C^+(X)$ such that $\alpha\ll\min\{\hat{\alpha},\check{\alpha}\}$.

Now, let $(t_i)_{i\in\mathbb{Z}}$ and $(u_i)_{i\in\mathbb{Z}}$ be bi-infinite sequences of real numbers with $t_0=u_0=0$ and $t_i\to\pm\infty$ as $i\to\pm\infty$, and let $x,y\in X$ be satisfying
$0<t_{i+1}-t_i\leq\alpha(\phi_{t_i}(x))$, $|u_{i+1}-u_i|\leq\alpha(\phi_{u_i}(y))$ and 
$d(\phi_{t_i}(x),\phi_{u_i}(y))\leq\alpha(\phi_{t_i}(x))$ for all $i\in\mathbb{Z}$.

Since $0<t_{i+1}-t_i$ for all $i\in\mathbb{Z}$ and $t_i\to\pm\infty$ as $i\to\pm\infty$, we can define a (piecewise linear) continuous map
$s:\mathbb{R}\to\mathbb{R}$ by setting $s(t_i)=u_i$ and extending linearly on $[t_i,t_{i+1})$,
for all $i\in\mathbb{Z}$.
Since $t_0=u_0=0$, this map fixes $0$.
Moreover, since $|u_{i+1}-u_i|\leq\alpha(\phi_{u_i}(y))$ we also have $|s(t)-u_i|\leq \alpha(\phi_{u_i}(y))$ for all $i\in\mathbb{Z}$ and $t\in [t_i,t_{i+1})$.

Fix $t\in\mathbb{R}$. Then, $t\in [t_i,t_{i+1})$ for some (actually unique) $i\in\mathbb{Z}$. Clearly $|t-t_i|\leq \alpha(\phi_{t_i}(x))$.
It follows that
\begin{eqnarray}
\label{perra}
d(\phi_t(x),\phi_{s(t)}(y))&\leq& d(\phi_t(x),\phi_{t_i}(x))+d(\phi_{s(t)}(y),\phi_{u_i}(y))+ d(\phi_{t_i}(x),\phi_{u_i}(y))\\
\notag &\leq& d(\phi_{t-t_i}(\phi_{t_i}(x)),\phi_{t_i}(x))+d(\phi_{s(t)-u_i}(\phi_{u_i}(y)),\phi_{u_i}(y))+\\
\notag & & \alpha(\phi_{t_i}(x))\\
\notag &\leq& \sup_{|\eta|\leq \alpha(\phi_{t_i}(x))}d(\phi_\eta(\phi_{t_i}(x)),\phi_{t_i}(x))+\\
\notag & & \sup_{|\eta|\leq \alpha(\phi_{u_i}(y))}d(\phi_\eta(\phi_{u_i}(y)),\phi_{u_i}(y))+\alpha(\phi_{t_i}(x))\\
\notag &\leq& \sup_{|\eta|\leq \hat{\alpha}(\phi_{t_i}(x))}d(\phi_\eta(\phi_{t_i}(x)),\phi_{t_i}(x))+\\
\notag & & \sup_{|\eta|\leq \hat{\alpha}(\phi_{u_i}(y))}d(\phi_\eta(\phi_{u_i}(y)),\phi_{u_i}(y))+\alpha(\phi_{t_i}(x))\\
\notag &\overset{\eqref{cha}}{\leq}& \hat{\delta}(\phi_{t_i}(x))+ \hat{\delta}(\phi_{u_i}(y))+\alpha(\phi_{t_i}(x)).
\end{eqnarray}

In the sequel, we estimate the three summands on the right-hand side above separately.

For the first one, we have $|t-t_i|\leq \alpha(\phi_{t_i}(x))<\hat{\alpha}(\phi_{t_i}(x))$ and then $\phi_t(x)\in B(\phi_{t_i}(x),\hat{\delta}(\phi_{t_i}(x)))$ by \eqref{cha}.
But $\hat{\delta}\ll \frac{1}3\delta$ so
\begin{equation}
\label{perro}
\hat{\delta}(\phi_{t_i}(x))<\frac{1}3\delta(\phi_t(x)).
\end{equation}

We estimate the second summand as follows.
Since $d(\phi_{t_i}(x),\phi_{u_i}(y))<\alpha(\phi_{t_i}(x))<\hat{\alpha}(\phi_{t_i}(x))$,
one has $\hat{\alpha}(\phi_{t_i}(x))<\frac{1}2\hat{\delta}(\phi_{u_i}(y))$
by $\hat{\alpha}\ll\frac{1}2\hat{\delta}$. Then,
$$
d(\phi_{t_i}(x),\phi_{u_i}(y))<\frac{1}2\hat{\delta}(\phi_{u_i}(y)).
$$
Also,
$|t-t_i|<\alpha(\phi_{t_i}(x))<\check{\alpha}(\phi_{t_i}(x))$ so
$
d(\phi_t(x),\phi_{t_i}(x))<\check{\delta}(\phi_{t_i}(x))
$
by \eqref{chacho}. But $d(\phi_{t_i}(x),\phi_{u_i}(y))<\alpha(\phi_{t_i}(x))<\check{\alpha}(\phi_{t_i}(x))<\check{\delta}(\phi_{t_i}(x))$ so
$\check{\delta}(\phi_{t_i}(x))<\frac{1}2\hat{\delta}(\phi_{u_i}(y))$ by $\check{\delta}\ll\frac{1}2\hat{\delta}$. Then,
$$
d(\phi_t(x),\phi_{t_i}(x))<\frac{1}2\hat{\delta}(\phi_{u_i}(y)).
$$
It follows that
\begin{eqnarray*}
d(\phi_t(x),\phi_{u_i}(y))&\leq& d(\phi_t(x),\phi_{t_i}(x))+d(\phi_{t_i}(x),\phi_{u_i}(y))\\
&<& \frac{1}2\hat{\delta}(\phi_{u_i}(y))+\frac{1}2\hat{\delta}(\phi_{u_i}(y))\\
&=& \hat{\delta}(\phi_{u_i}(y)).
\end{eqnarray*}
This together with $\hat{\delta}\ll\frac{1}3\delta$ implies
\begin{equation}
\label{perruno}
\hat{\delta}(\phi_{u_i}(y))<\frac{1}3\delta(\phi_t(x)).
\end{equation}

To estimate the last summand, since $\alpha\ll\hat{\alpha}\ll\hat{\delta}$, we also have
$\alpha<\hat{\delta}$ so
$\alpha(\phi_{t_i}(x))<\hat{\delta}(\phi_{t_i}(x))$ thus
\begin{equation}
\label{perrita}
\alpha(\phi_{t_i}(x))<\frac{1}3\delta(\phi_t(x)).
\end{equation}

Combining \eqref{perra}, \eqref{perro}, \eqref{perruno} and \eqref{perrita} we obtain
$$
d(\phi_t(x),\phi_{s(t)}(y))<\delta(\phi_t(x)),\quad\quad\forall 
t\in \mathbb{R}.
$$
Therefore, $y\in \phi_{[-\epsilon,\epsilon]}(x)$ by the choice of $\delta$.
This completes the proof.
\end{proof}

We finish this section by proving two lemmas to be used in the proof of Item (4) of Theorem \ref{sa}. They are flow-versions of the ones for maps in \cite{ylm}.

\begin{lemma}
\label{le2}
For every flow $\phi$ of a metric space $X$ and every $\delta\in C^+(X)$ there is $\delta'\in C^+(X)$ such that
\begin{equation}
\label{rado}
\frac{S(t,\delta,K)}{\beta(t,\delta,K)}\leq \frac{R(t,\delta',K)}{\beta(t,\delta',K)}
\end{equation}
for all compact $K\subset X$ and $t>0$.
\end{lemma}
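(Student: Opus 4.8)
The plan is to manufacture $\delta'$ from $\delta$ using Lemma \ref{power} so that it is comparable to $\delta$ in two ways at once: pointwise smaller than $\delta$ (to compare the $\beta$-factors) and such that $2\delta'\ll\delta$ (to compare the separating and spanning counts). Concretely, I would apply Lemma \ref{power} to $\rho=\delta$ to obtain $\gamma\in C^+(X)$ with $\gamma\ll\delta$, and set $\delta'=\tfrac12\gamma$. Then $\delta'\in C^+(X)$, $2\delta'=\gamma\ll\delta$, and since $2\delta'\ll\delta$ implies $2\delta'<\delta$, we get $\delta'<\delta$ pointwise.

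The comparison of denominators is immediate. Because $\delta'<\delta$ on all of $X$, for every compact $K$ and $t>0$ one has $\delta'(\phi_s(z))<\delta(\phi_s(z))$ for all $(z,s)\in K\times[0,t]$, so taking infima yields $\beta(t,\delta',K)\le\beta(t,\delta,K)$. Both infima are strictly positive, since $K\times[0,t]$ is compact and $(z,s)\mapsto\delta(\phi_s(z))$ (resp. $\delta'$) is continuous and positive by continuity of the flow; hence the quotients below are well-defined.

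The heart of the argument is the count estimate $S(t,\delta,K)\le R(t,\delta',K)$. Here I would fix a $(t,\delta,K)$-separating set $E$ of maximal cardinality and an arbitrary $(t,\delta',K)$-spanning set $F$, and construct an injection $\pi:E\to F$. For each $x\in E\subset K$, spanning provides $\pi(x)=z\in F$ with $d(\phi_s(z),\phi_s(x))\le\delta'(\phi_s(z))$ for all $0\le s\le t$. If $\pi(x)=\pi(x')=z$ for distinct $x,x'\in E$, the triangle inequality gives $d(\phi_s(x),\phi_s(x'))\le 2\delta'(\phi_s(z))$ for all such $s$. Since $E$ is $(t,\delta,K)$-separating there is $0\le s\le t$ with $d(\phi_s(x),\phi_s(x'))\ge\delta(\phi_s(x))$. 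At that $s$, from $d(\phi_s(z),\phi_s(x))\le\delta'(\phi_s(z))<2\delta'(\phi_s(z))$ we get $\phi_s(x)\in B(\phi_s(z),2\delta'(\phi_s(z)))$, so $2\delta'\ll\delta$ forces $2\delta'(\phi_s(z))<\delta(\phi_s(x))$. Chaining, $\delta(\phi_s(x))\le d(\phi_s(x),\phi_s(x'))\le 2\delta'(\phi_s(z))<\delta(\phi_s(x))$, a contradiction. Thus $\pi$ is injective, $|E|\le|F|$, and taking $F$ minimal gives $S(t,\delta,K)=|E|\le R(t,\delta',K)$.

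Finally, assembling the pieces, for every compact $K$ and $t>0$ one has
$$
\frac{S(t,\delta,K)}{\beta(t,\delta,K)}\le\frac{R(t,\delta',K)}{\beta(t,\delta,K)}\le\frac{R(t,\delta',K)}{\beta(t,\delta',K)},
$$
where the first inequality uses $S\le R$ with the common positive denominator $\beta(t,\delta,K)$, and the second uses $\beta(t,\delta',K)\le\beta(t,\delta,K)$. The genuinely delicate point, and the one I expect to require the most care, is the count comparison: the rescaling function $\delta'$ in the spanning condition is evaluated at the center $z$, while the separating condition is measured at $x$, and it is precisely the relation $2\delta'\ll\delta$ that transfers the estimate from the base point $\phi_s(z)$ to the nearby base point $\phi_s(x)$.
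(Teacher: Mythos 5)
Your proof is correct and takes essentially the same route as the paper's: both arguments inject a $(t,\delta,K)$-separating set into a $(t,\delta',K)$-spanning set and use the relation $\ll$ from Lemma~\ref{power} to transfer the rescaling function between the mismatched base points, together with $\delta'<\delta$ for the denominators. Your single choice $\delta'=\tfrac12\gamma$ with $\gamma\ll\delta$ is a minor economy over the paper's two successive applications of Lemma~\ref{power} ($\gamma\ll\delta$, then $\delta'\ll\tfrac14\gamma$), but the mechanism is identical.
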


\begin{proof}
For the given $\delta$ by Lemma \ref{power} we choose $\gamma\in C^+(X)$ with $\gamma \ll\delta$. We have $\frac{1}4\gamma\in C^+(X)$ so we can choose
$\delta'\in C^+(X)$ such that $\delta' \ll\frac{1}4\gamma$.
Take a compact $K$ and $t>0$.

First, we observe that $\delta'<\delta$ so
\begin{equation}
\label{jura}
\beta(t,\delta',K)\leq \beta(t,\delta,K).
\end{equation}
Next, we let $E$ and $F$ be $(t,\delta,K)$-separating and $(t,\delta',K)$-spanning sets respectively.
We assert that for every $y\in F$ there exists at most one $x\in E$ such that
$d(\phi_s(x),\phi_s(y))\leq \delta'(\phi_s(y))$ for every $0\leq s\leq t$.

Otherwise, there are distinct $x,x'\in E$ such that
$d(\phi_s(x),\phi_s(y))\leq \delta'(\phi_s(y))$
and $d(\phi_s(x),\phi_s(y))\leq \delta'(\phi_s(y))$ for every $0\leq s\leq t$.

Then, by Remark 3.2 in \cite{lny},
$d(\phi_s(x),\phi_s(y))\leq \frac{1}4\gamma(\phi_s(x))$
and
$$
d(\phi_s(x'),\phi_s(y))\leq \frac{1}4\gamma(\phi_s(x')),\qquad\forall 0\leq s\leq t
$$
so
\begin{eqnarray*}
d(\phi_s(x),\phi_s(x'))&\leq& d(\phi_s(x),\phi_s(y))+d(\phi_s(x'),\phi_s(y))\\
&\leq& \frac{\gamma(\phi_s(x))}4+\frac{\gamma(\phi_s(x'))}4\\
&<&\max\{\gamma(\phi_s(x)),\gamma(\phi_s(x'))\}\\
&<& \delta(\phi_s(x)),\quad\quad\quad \forall 0\leq s\leq t,
\end{eqnarray*}
by Remark 3.2 in \cite{lny}. This contradicts that $E$ is $(t,\delta,K)$-separating proving the assertion.

On the other hand, since $F$ is $(t,\delta',K)$-spanning,
for all $x\in E$ we can choose $y\in F$ such that
$d(\phi_s(x),\phi_s(y))\leq \delta'(\phi_s(y))$ for every $0\leq s\leq t$.
This results in a map $x\in E\mapsto y\in F$ which is injective by the assertion. It follows that $E$ has at most the same quantity of elements of $F$ and this implies
$S(t,\delta,K)\leq R(t,\delta',K)$. This together with \eqref{jura} implies \eqref{rado}.
\end{proof}

The second lemma is as follows.

\begin{lemma}
\label{le1}
For every flow $\phi$ of a metric space $X$ and every
$\delta\in C^+(X)$ there is $\delta'\in C^+(X)$ such that
\begin{equation}
\label{silve}
\frac{R(t,\delta,K)}{\beta(t,\delta,K)}\leq \frac{S(t,\delta',K)}{\beta(t,\delta',K)}
\end{equation}
for all compact $K\subset X$ and $t>0$.
\end{lemma}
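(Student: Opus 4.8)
The plan is to mirror the classical fact that a separating set of maximal cardinality is also spanning, adapted to the rescaled setting, so that this lemma becomes the natural dual of Lemma \ref{le2}. Given $\delta\in C^+(X)$, I would first invoke Lemma \ref{power} to pick $\delta'\in C^+(X)$ with $\delta'\ll\delta$; recall this yields both $\delta'<\delta$ and, crucially, the conversion property that $d(\phi_s(x),\phi_s(y))\leq\delta'(\phi_s(y))$ forces $\delta'(\phi_s(y))<\delta(\phi_s(x))$ (this is exactly Remark 3.2 in \cite{lny}, already used in the proof of Lemma \ref{le2}). I then fix a compact $K\subset X$ and $t>0$. If $S(t,\delta',K)=\infty$ the inequality \eqref{silve} is immediate, so I may assume $S(t,\delta',K)<\infty$ and choose a $(t,\delta',K)$-separating set $E\subset K$ realizing this maximal cardinality, $|E|=S(t,\delta',K)$.

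The key step is to show that such an $E$ is $(t,\delta,K)$-spanning, which will give $R(t,\delta,K)\leq|E|=S(t,\delta',K)$. Fix $y\in K$. If $y\in E$, the point $x=y$ spans $y$ trivially. If $y\notin E$, then $E\cup\{y\}\subset K$ has cardinality exceeding $S(t,\delta',K)$ and hence cannot be $(t,\delta',K)$-separating; since $E$ itself is separating, the offending unseparated pair must involve $y$. Thus there is $x\in E$ for which either $d(\phi_s(x),\phi_s(y))<\delta'(\phi_s(x))$ holds for all $0\leq s\leq t$, or $d(\phi_s(x),\phi_s(y))<\delta'(\phi_s(y))$ holds for all $0\leq s\leq t$. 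In the first case $\delta'<\delta$ gives $d(\phi_s(x),\phi_s(y))<\delta(\phi_s(x))$ directly; in the second case the conversion property from $\delta'\ll\delta$ gives $d(\phi_s(x),\phi_s(y))<\delta'(\phi_s(y))<\delta(\phi_s(x))$. Either way $x$ spans $y$ at scale $\delta$, so $E$ is $(t,\delta,K)$-spanning.

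It remains to combine this with the $\beta$-factors. Since $\delta'<\delta$ we have $\delta'(\phi_s(z))\leq\delta(\phi_s(z))$ for every $(z,s)\in K\times[0,t]$, so taking infima gives $\beta(t,\delta',K)\leq\beta(t,\delta,K)$. Together with $R(t,\delta,K)\leq S(t,\delta',K)$ this yields
$$
\frac{R(t,\delta,K)}{\beta(t,\delta,K)}\leq\frac{S(t,\delta',K)}{\beta(t,\delta,K)}\leq\frac{S(t,\delta',K)}{\beta(t,\delta',K)},
$$
which is \eqref{silve}.

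I expect the main obstacle to be the asymmetry of the separating condition in its two arguments: the unseparated pair produced by maximality may be oriented either way, and only the orientation with $\delta'$ evaluated at $\phi_s(y)$ requires the conversion afforded by $\delta'\ll\delta$. Getting this bookkeeping right, together with correctly disposing of the case $S(t,\delta',K)=\infty$ so that a maximal-cardinality set genuinely exists, is where the care lies; the remainder is the standard ``maximal separating set is spanning'' argument.
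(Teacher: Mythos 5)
Your proposal is correct and follows essentially the same route as the paper: choose $\delta'\ll\delta$ via Lemma \ref{power}, show a maximal $(t,\delta',K)$-separating set is $(t,\delta,K)$-spanning by splitting into the two orientations of the unseparated pair (using $\delta'<\delta$ in one case and the $\ll$ conversion of Remark 3.2 in \cite{lny} in the other), and combine with $\beta(t,\delta',K)\leq\beta(t,\delta,K)$. Your explicit handling of the case $S(t,\delta',K)=\infty$ is a small point of care the paper leaves implicit.
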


\begin{proof}
Again for the given $\delta$ by Lemma \ref{power} we choose $\delta\in C^+(X)$ with
choose $\delta' \ll\delta$.
In particular, $\delta'<\delta$.
Take a compact $K$ and $t>0$.
Then, \eqref{jura} holds.

Now, fix a $(t,\delta',K)$-separating set $E$ of cardinality $S(t,\delta',K)$.
If $y\in K\setminus E$, then $E\cup \{y\}$ is not $(t,\delta',K)$-separating due to the maximality of the cardinality of $E$.
Then, there are distinct $z,w\in E\cup\{y\}$ such that
$$
d(\phi_s(z),\phi_s(w))\leq \epsilon'(\phi_s(z)),\quad\quad\forall
0\leq s\leq t.
$$
Since $E$ is $(t,\delta',K)$-separating, we have $y\in \{z,w\}$.
First assume $y=z$.
Then, $w\in E$ and $d(\phi_s(y),\phi_s(w))\leq \delta'(\phi_s(y))$ for all $0\leq s\leq t$.
So, $d(\phi_s(w),\phi_s(y))\leq \epsilon(\phi_s(w))$ for all $0\leq s\leq t$ by Remark 3.2 in \cite{lny} since $\delta' \ll\delta$.
If $y=w$, then $z\in E$ and $d(\phi_s(z),\phi_s(y))\leq \delta'(\phi_s(z))<\delta(\phi_s(z))$ for all
$0\leq s\leq t$ since $\delta'<\delta$. All together prove that $E$ is $(t,\delta,K)$-spanning yielding $R(t,\delta,K)\leq S(t,\delta',K)$. This together with \eqref{jura} implies \eqref{silve}.
\end{proof}

\section{Proof of the theorems}
\label{sec3}

\begin{proof}[Proof of Theorem \ref{thA}]
Let $\phi$ be a topological expansive flow of a metric space $X$.

First assume that $\phi$ is topologically conjugate to another flow $\varphi$ of a metric space $Y$. Then,
there is a homeomorphism $h:Y\to X$ such that
$\phi_t\circ h=h\circ \varphi_t$ for all $t\in\mathbb{R}$.
Fix $\epsilon>0$ and let $\delta\in C^+(X)$ be given by the topological expansivity of $\phi$.
By Lemma 2.7 in \cite{lny} there is $\hat{\delta}\in C^+(Y)$ such that
$$
y,y'\in Y\quad\mbox{ and }\quad d(y,y')<\hat{\delta}(y)\quad\implies\quad d(h(y),h(y'))<\delta(h(y)).
$$
Let $y,y'\in Y$ and a continuous map $s:\mathbb{R}\to\mathbb{R}$ fixing $0$ to satisfy
$$
d(\varphi_t(y),\varphi_{s(t)}(y'))<\hat{\delta}(\varphi_t(y)),\qquad \forall t\in\mathbb{R}.
$$
Then, $d(h\circ \varphi_t(y),h\circ\varphi_{s(t)}(y'))<\delta(h\circ\varphi_t(y))$
and so $d(\phi_t(h(y)),\phi_{s(t)}(h(y')))<\delta(\phi_t(h(y)))$ for all $t\in\mathbb{R}$.
It follows from the choice of $\delta$ that
$h(y')\in \phi_{[-\epsilon,\epsilon]}(h(y))$ so
$h(y')\in h(\varphi_{[-\epsilon,\epsilon]}(y))$ thus
$y'\in\varphi_{[-\epsilon,\epsilon]}(y)$. Therefore, $\varphi$ is topological expansive too.

Now, assume that $X$ is compact and again fix $\epsilon>0$.
Let $\delta\in C^+(X)$ be given by the topological expansivity of $\phi$.
Since $X$ is compact, there is a positive real number $\alpha$ such that
$\alpha<\delta(z)$ for all $z\in X$.
If $x,y\in X$ and a continuous map $s:\mathbb{R}\to\mathbb{R}$ fixing $0$ satisfy
$d(\phi_t(x),\phi_{s(t)}(y))<\alpha$ for all $t\in\mathbb{R}$,
then $d(\phi_t(x),\phi_{s(t)}(y))<\delta(\phi_t(x))$ for all $t\in\mathbb{R}$
thus $y\in \phi_{[-\epsilon,\epsilon]}(x)$ proving that $\phi$ is expansive.
This completes the proof.
\end{proof}

\begin{proof}[Proof of Theorem \ref{don}]
Let $\phi$ be a flow of a metric space $X$.
We shall prove that $\phi$ is topological expansive if and only if $\phi$ is rescaling expansive and the singularities of $\phi$ are isolated points of $X$.

To prove the necessity, assume that $\phi$ is topological expansive.
Then, every singularity is an isolated point by Lemma \ref{alacran}.
To prove that $\phi$ is rescaling expansive,
let $\epsilon>0$ and $\delta\in C^+(X)$ be given by the topological expansivity of $\phi$.
Define $\hat{\delta}:X\to [0,\infty)$ by
$$
\hat{\delta}(x) = \left\{
\begin{array}{rcl}
\frac{\delta(x)}2, & \mbox{if} & x\in X\setminus Sing(\phi)\\
& & \\
0,& \mbox{if} & x\in Sing(X).
\end{array}
\right.
$$
Since $\delta$ is continuous and every singularity is isolated, $\hat{\delta}$ is continuous. Since $\delta>0$, we conclude that $\hat{\delta}\in C_\phi(X)$.
Let $x,y\in X$ and $s:\mathbb{R}\to\mathbb{R}$ be a continuous function fixing $0$ such that
$$
d(\phi_t(x),\phi_t(y))\leq \hat{\delta}(\phi_t(x)),\quad\quad\forall t\in \mathbb{R}.
$$
If $x\in Sing(\phi)$, $x=y$ and so $y\in \phi_{[-\epsilon,\epsilon]}(x)$.
Otherwise, $d(\phi_t(x),\phi_t(y))<\delta(\phi_t(x))$ for all $t\in \mathbb{R}$ thus
$y\in\phi_{[-\epsilon,\epsilon]}(x)$ by the choice of $\delta$.
Therefore, $\phi$ is rescaling expansive.

To prove the sufficiency, assume that $\phi$ is rescaling expansive and that every singularity is an isolated point of $X$.
Then, $\phi|_{X\setminus Sing(\phi)}$ is topological expansive by Lemma \ref{meta}. We have a disjoint union $X=(X\setminus Sing(\phi))\cup Sing(\phi)$ and, since every singularity is isolated, this union is formed by closed subset. Both $X\setminus Sing(\phi)$ and $Sing(\phi)$ are invariant,
and $\phi|_{Sing(\phi)}$ is topological expansive by Example \ref{go} since
$\phi|_{Sing(\phi)}$ is the trivial flow.
Therefore, $\phi$ is topological expansive by Lemma \ref{chuzo}.
\end{proof}

\begin{proof}[Proof of Theorem \ref{sa}]
First we prove Item (1).
By hypothesis there is a homeomorphism $h:Y\to X$ such that $\phi_t\circ h=h\circ \varphi_t$ for all $t\in\mathbb{R}$.
Fix $\delta\in C^+(Y)$. By Lemma 2.7 in \cite{lny} there is $\hat{\delta}\in C^+(X)$ such that
$$
x,x'\in X\quad\mbox{ and }\quad d(x,x')\leq \hat{\delta}(x)\quad\implies\quad d(h^{-1}(x),h^{-1}(x'))\leq \delta(h^{-1}(x)).
$$
By replacing $\hat{\delta}$ by $\frac{1}2\min\{\hat{\delta}, \delta\circ h^{-1}\}$ if necessary, we can assume $\hat{\delta}< \delta\circ h^{-1}$.

Fix a compact $K\subset Y$. Then, $h(E)\subset X$ is compact.
Given $t>0$ let $E$ be a $(t,\delta,K)$-separating set of $\varphi$.
Pick distinct $x,x'\in h(E)$. Then, there are distinct $y,y'\in E$ and $0\leq s\leq t$ such that $h(y)=x$, $h(y')=x'$ and $d(\varphi_s(y),\varphi_s(y'))>\delta(\varphi_s(y))$.
The choice of $\hat{\delta}$ implies
$d(h(\varphi_s(y)),h(\varphi_s(y'))>\hat{\delta}(h(\varphi_s(y)))$
so $d(\phi_s(x),\phi_s(x'))>\hat{\delta}(\phi_s(x))$.
Since $h(E)\subset h(K)$, we conclude that $h(E)$ is $(t,\hat{\delta},h(K))$-separating for $\phi$. Therefore,
$$
S(t,\delta,K,\varphi)\leq S(t,\hat{\delta},h(K),\phi),\quad\quad\forall t>0.
$$
On the other hand, for all $0\leq s\leq t$ one has
$\hat{\delta}(\phi_s(x))=\hat{\delta}(\phi_s(h(y)))=\hat{\delta}(h(\varphi_s(y)))\leq \delta(\varphi_s(y))$ so
$$
\beta(t,\hat{\delta},h(K),\phi)\leq \beta(t,\delta,K,\varphi),\quad\quad\forall t>0.
$$
It follows that
$$
\frac{S(t,\delta,K,\varphi)}{\beta(t,\delta,K,\varphi)}\leq \frac{S(t,\hat{\delta},h(K),\phi)}{\beta(t,\hat{\delta},h(K),\phi)},\quad\quad\forall t>0.
$$
This implies $e^*(\varphi,K)\leq e^*(\phi)$ for all compact $K\subset Y$
so $e^*(\varphi)\leq e^*(\phi)$. Reversing the roles of $\varphi$ and $\phi$ we obtain
$e^*(\phi)\leq e^*(\varphi)$ hence $e^*(\phi)=e^*(\varphi)$.
Item (1) is proved.

To prove Item (2), we first note that $e^*(\phi)=e^*(\phi,X)$ since $X$ is compact.
Next, that every positive constant $\epsilon$ can be seen as a constant function $\epsilon\in C^+(X)$. In such a case $S(t,\epsilon)=S(t,\epsilon,X)$ and
$\beta(t,\epsilon,X)=\epsilon$ for all $t>0$ so
\begin{eqnarray*}
e(\phi)&=&\lim_{\epsilon\to0}\frac{1}t\log S(t,\epsilon)\\
&=&\sup_{\epsilon>0}\limsup_{t\to\infty}\frac{1}t\log \left(\frac{S(t,\epsilon,X)}{\beta(t,\epsilon,X)}\right)\\
&\leq& \sup_{\delta\in C^+(X)}\limsup_{t\to\infty}\frac{1}t\log\left(\frac{S(t,\delta,X)}{\beta(t,\delta,X)}\right)\\
&=&e^*(\phi,X)\\
&=&e^*(\phi).
\end{eqnarray*}
For the reversed inequality,
let $\delta\in C^+(X)$ and $E\subset X$ be $(t,\delta,X)$-separating for some $t>0$.
Since $X$ is compact, there is a positive number $\rho$ such that
$\delta(z)>\rho$ for all $z\in X$.
Thus, $E$ is $(t,\rho)$-separating yielding
$S(t,\delta,X)\leq S(t,\rho)$ and $\beta(t,\delta,X)\geq \rho$ thus
$$
\limsup_{t\to\infty}\frac{1}t\log\left(\frac{S(t,\delta,X)}{\beta(t,\delta,X)}\right)
\leq \limsup_{t\to\infty}\frac{1}t\log S(t,\rho)\leq e(\phi).
$$
By taking the supremum over $\delta\in C^+(X)$ above we get $e^*(\phi,X)\leq e(\phi)$ hence
$e^*(\phi)\leq e(\phi)$. Item (2) is proved.

To prove Item (3) we fix a compact $K\subset X$ and $\delta\in C^+(X)$.
If $E$ is a $(t,\delta,K)$-separating set for $\varphi$ and $x,y\in E$ are distinct, there exists $0\leq s\leq t$ such that $d(\varphi_s(x),\varphi_s(y))>\delta(\varphi_s(x))$.
Then, $d(\phi_{s'}(x),\phi_{s'}(y))>\delta(\phi_{s'}(x))$ with $0\leq s'=as\leq at$
so $E$ is $(at,\delta,K)$-separating thus
$$
S(t,\delta,K,\varphi)\leq S(at,\delta,K,\phi).
$$
Likewise,
\begin{eqnarray*}
\beta(t,\delta,K,\varphi)&=&\inf\{\delta(\varphi_s(z))\mid (z,s)\in K\times [0,t]\}\\
&=&\inf\{\delta(\phi_{s'}(z))\mid (z,s')=(z,as)\in K\times [0,at]\}\\
&=&\beta(at,\delta,K,\phi)
\end{eqnarray*}
so
\begin{eqnarray*}
e^*(\varphi,K)&=&\sup_{\delta\in C^+(X)}\limsup_{t\to\infty}\frac{1}t\log\left(\frac{S(t,\delta,K,\varphi)}{\beta(t,\delta,K,\varphi)}
\right)\\
&\leq& a\sup_{\delta\in C^+(X)}\limsup_{t\to\infty}\frac{1}t\log \left(\frac{S(t,\delta,K,\phi)}{\beta(t,\delta,K,\phi)}\right)\\
&=& ae^*(\phi,K).
\end{eqnarray*}
Since $K$ is an arbitrary compact, $e^*(\varphi)\leq ae^*(\phi)$.
Since $\phi_t=\varphi_{a^{-1}t}$ for all $t\in\mathbb{R}$, we obtain
$e^*(\phi)\leq a^{-1}e^*(\varphi)$ hence $e^*(\varphi)=ae^*(\phi)$.
Item (3) is proved.

To prove Item (4) fix $\delta\in C^+(X)$. For this $\delta$ we let
$\delta'\in C^+(X)$ be given by Lemma \ref{le1}. Then, 
\begin{eqnarray*}
\limsup_{t\to\infty}\frac{1}t\log\left(\frac{S(t,\delta,K)}{\beta(t,\delta,K)}\right)&\leq&
\limsup_{t\to\infty}\frac{1}t\log\left(\frac{R(t,\delta',K)}{\beta(t,\delta',K)}\right)\\
&\leq&
\sup_{K'}\sup_{\delta''\in C^+(X)}\limsup_{t\to\infty}\frac{1}t\log \left(\frac{R(t,\delta'',K')}{\beta(t,\delta'',K')}\right)
\end{eqnarray*}
for all compact $K\subset X$.
By taking the supremum over $\delta$ and $K$ above we obtain
$$
e^*(\phi)\leq\sup_K\sup_{\delta\in C^+(X)}\limsup_{t\to\infty}\frac{1}t\log \left(\frac{R(t,\delta,K)}{\beta(t,\delta,K)}\right).
$$

To prove the reversed inequality,
we fix again $\delta\in C^+(X)$. For this $\delta$ we take $\delta'\in C^+(X)$ given by Lemma \ref{le2}.
Then,
\begin{eqnarray*}
\limsup_{t\to\infty}\frac{1}t\log\left(\frac{R(t,\delta,K)}{\beta(t,\delta,K)}\right)&\leq&
\limsup_{t\to\infty}\frac{1}t\log\left(\frac{S(t,\delta',K)}{\beta(t,\delta',K)}\right)\\
&\leq& \sup_{K'}\sup_{\delta''\in C^+(X)}\limsup_{t\to\infty}\frac{1}t\log \left(\frac{S(t,\delta'',K')}{\beta(t,\delta'',K')}\right)\\
&=& e^*(\phi)
\end{eqnarray*}
for all compact $K\subset X$.
By taking the supremum over $\delta$ and $K$ above we obtain the desired inequality.
Item (4) is proved.
\end{proof}

\begin{proof}[Proof of Theorem \ref{thB}]
Let $\phi$ be a topological expansive flow of a metric space $X$.
Suppose that $\phi$ is dynamically isolated at infinity.
Then, there is a compact $K\subset X$ intersecting all regular orbits of $\phi$.

We now follow the arguments in \cite{bw} with the aid of Lemma \ref{p1}.
Take $\epsilon=1$ and let $\alpha\in C^+(X)$ be given by Lemma \ref{p1}.
Given $t,\rho>0$ we let $v_\rho(t)$ be the number of periodic orbits with period in $[t-\rho,t+\rho]$.
Denote by $[R]$ the integer part of $R\in \mathbb{R}$.

We claim that
\begin{equation}
\label{poro}
v(t_*, {\frac{\beta(t,\alpha,K)}2})\leq S(t,\alpha,K),\quad\quad\forall t>0,\, 0\leq t_*\leq t.
\end{equation}

Indeed, fix $t>0$ and $0\leq t_*\leq t$. For simplicity, we write $\beta=\beta(t,\alpha,K)$.
By selecting one point for each periodic orbits with period in $[t_*-\frac{\beta}2,t_*+\frac{\beta}2]$ we form a subset $E$ with $card(E)=v(t_*,{\frac{\beta}2})$. Since $K$ intersects every regular orbit (and periodic orbits are regular), we can assume $E\subset K$.

Let us prove that $E$ is $(t,\alpha,K)$-separating.
Otherwise, since $E\subset K$, there would exist distinct $x,y\in E$ such that
$$
d(\phi_s(x),\phi_s(y))<\alpha(\phi_s(x)),\quad\quad\forall 0\leq s\leq t.
$$
Let $a$ and $b$ be the periods of $x$ and $y$ respectively so
$a,b\in [t_*-\frac{\beta}2,t_*+\frac{\beta}2]$, $\phi_a(x)=x$ and $\phi_b(y)=y$.

Define $m=[\frac{t_*-\frac{\beta}2}\beta]$ and the sequences
$(t_i)_{i\in\mathbb{Z}},(u_i)_{i\in\mathbb{Z}}$ by
$$
t_i=pa+q\beta\quad\mbox{ and }\quad u_i=pb+q\beta
$$
whenever $i=pm+q$ for some $p\in \mathbb{Z}$ and $0\leq q<m$.

Since $0\leq q\beta\leq t$ for any integer $q=0,\cdots, m-1$,
one has
$$
d(\varphi_{t_i}(x),\varphi_{u_i}(y))=d(\varphi_{q\beta}(x),\varphi_{q\beta}(y))<\alpha(\phi_{q\beta}(x))=\alpha(\phi_{t_i}(x)),\quad\quad\forall i\in\mathbb{Z}.
$$
Clearly $t_i\to\pm\infty$ as $i\to\pm\infty$.
Moreover, since $x\in K$ and $0\leq q\beta\leq t$ one has
$$
t_{i+1}-t_i=\beta\leq\alpha(\phi_{q\beta}(x))=\alpha(\phi_{t_i}(x)),\quad\quad\forall i\in\mathbb{Z}.
$$
Likewise,
$$
|u_{i+1}-u_i|\leq\alpha(\phi_{u_i}(y)), \quad\quad\forall i\in\mathbb{Z}.
$$
Then, $y\in \phi_{[-1,1]}(x)$ by Lemma \ref{p1} and so $x$ and $y$ belong to the same orbit of $\phi$.
Since distinct points in $E$ (like $x$ and $y$) belong to distinct orbits,
we get a contradiction.
Therefore, $E$ is $(t,\alpha,K)$-separating.
It follows that
$card(E)\leq S(t,\alpha,K))$ and, since $card(E)=v(t_*, {\frac{\beta}2})$,
we get \eqref{poro}.

Now, we observe that
$$
v(t)\leq \sum_{n=1}^{[\frac{t}{\beta(t,\alpha,K)}]}v(n\beta(t,\alpha,K), {\frac{\beta(t,\alpha,K)}2})
$$
so \eqref{poro} implies
$$
v(t)\leq t\cdot\frac{S(t,\alpha,K)}{\beta(t,\alpha,K)},\quad\quad\forall t>0.
$$
Then,
\begin{eqnarray*}
\limsup_{t\to\infty}\frac{1}t\log v(t)&\leq& \limsup_{t\to\infty}\left(\frac{1}t\log t+\frac{1}t\log \left(\frac{S(t,\alpha,K)}{\beta(t,\alpha,K)}\right)\right)\\
&=&
\limsup_{t\to\infty}\frac{1}t\log \left(\frac{S(t,\alpha,K)}{\beta(t,\alpha,K)}\right)\\
&\leq& e(\phi)
\end{eqnarray*}
completing the proof.
\end{proof}

\section*{Acknowledgments}

\noindent
The first author would like to thank the School of Mathematical Sciences of the
Liaoning University, Shenyang, China, for its kindly hospitality during the preparation of this work.

\section*{Funding}

\noindent
YY was partially supported by the National Natural Science Foundation of China (Grant Nos. 12101281) and Scientific Research Foundation of Education Department of Liaoning Province, China (Grant Nos. JYTQN2023190).

\section*{Declaration of competing interest}

\noindent
There is no competing interest.

\section*{Data availability}

\noindent
No data was used for the research described in the article.

\end{document}